\theoremstyle{plain}
\newtheorem{theo}{Theorem}[section]
\newtheorem{lemm}[theo]{Lemma}
\newtheorem{prop}[theo]{Proposition}
\newtheorem{coro}[theo]{Corollary}
\theoremstyle{definition}
\newtheorem{defi}[theo]{Definition}
\theoremstyle{remark}
\newtheorem{rema}[theo]{Remark}
\numberwithin{equation}{section}
\DeclareMathOperator{\spec}{Spec} 
\newcommand{\iso}{\cong}
\newcommand{\id}{{\rm id}}
\newcommand{\st}{\,|\,} 
\newcommand{\dual}{^{\vee}} 
\newcommand{\mono}{\hookrightarrow} 
\newcommand{\mor}[1]{\xrightarrow{#1}} 
\newcommand{\isomor}{\mor{\sim}} 
\newcommand{\comp}{\circ} 
\newcommand{\sw}[1]{\lvert#1\rvert} 
\newcommand{\rest}[1]{|_{#1}} 
\newcommand{\pr}{\pi} 
\newcommand{\diag}{\delta} 
\newcommand{\ring}[1]{\mathbb{#1}}
\newcommand{\K}{\Bbbk} 
\newcommand{\C}{\ring{C}}
\newcommand{\Z}{\ring{Z}}
\newcommand{\cat}[1]{{\mathfrak{#1}}} 
\newcommand{\opp}{^{\circ}} 
\newcommand{\mrs}{^{\#}} 
\newcommand{\s}[1]{\mathcal{#1}} 
\newcommand{\Hom}{{\rm Hom}} 
\newcommand{\sHom}{\s{H}om} 
\newcommand{\Aut}{{\rm Aut}} 
\newcommand{\Pic}{{\rm Pic}} 
\newcommand{\F}{\mathbb{F}} 
\newcommand{\Ps}{\mathbb{P}} 
\newcommand{\w}{{\rm w}} 
\newcommand{\so}{\s{O}} 
\newcommand{\ds}{\omega} 
\newcommand{\dss}{\ds'} 
\newcommand{\sod}[1][]{\s{O}_{\Delta_{#1}}} 
\newcommand{\cone}[1]{\mathsf{C}(#1)} 
\newcommand{\FM}[1]{\Phi_{#1}} 
\newcommand{\FMa}[1]{\Phi'_{#1}} 
\newcommand{\FMcomp}{\star} 
\newcommand{\farg}{-} 
\newcommand{\trspan}[1]{\langle#1\rangle} 
\newcommand{\tc}[1]{\s{C}_{#1}} 
\newcommand{\D}[1][]{\mathrm{D}^{#1}} 
\newcommand{\siso}[1]{\iota_{#1}} 
\newcommand{\mult}[1]{\mu_{#1}} 
\newcommand{\comult}[1]{\nu_{#1}} 
\newcommand{\multind}[2]{\bar{\mu}_{#2,#1}} 
\newcommand{\ST}[1]{\mathsf{T}_{#1}} 
\newcommand{\M}[1]{\mathsf{M}_{\Z_{#1}}} 
\newcommand{\Ma}[1]{\tilde{\mathsf{M}}_{\Z_{#1}}} 
\newcommand{\MM}[1]{\mathsf{N}_{\Z_{#1}}} 
\newcommand{\T}[1]{\mathsf{L}_{#1}} 
\newcommand{\fun}{\mathsf{F}} 
\newcommand{\funn}{\mathsf{G}} 
\newcommand{\funp}{\tilde{\fun}} 
\newcommand{\fundiag}[1][\Ker]{\mult{#1}} 
\newcommand{\tdual}{^!} 
\newcommand{\Ker}{\s{K}} 
\newcommand{\Kerr}{\s{L}} 
\newcommand{\Kerrr}{\s{M}} 
\newcommand{\Kerd}{\Ker\tdual} 
\newcommand{\Kerrd}{\Kerr\tdual} 
\newcommand{\exc}[1]{\s{E}_{#1}} 
\newcommand{\ldual}[2]{L^{(#1)}#2} 
\newcommand{\excd}[1]{\s{E}'_{#1}} 
\newcommand{\excp}[1]{\tilde{\s{E}}_{#1}} 
\newcommand{\clr}{\cat{R}} 
\newcommand{\crr}{\cat{S}} 
\newcommand{\lr}[1]{\s{R}_{#1}} 
\newcommand{\rr}[1]{\s{S}_{#1}} 
\newcommand{\lrd}[1]{\rho_{#1}} 
\newcommand{\drr}[1]{\sigma_{#1}} 
\newcommand{\rrlr}[1]{\tau_{#1}} 
\newcommand{\lrlr}[1]{\alpha_{#1}} 
\newcommand{\lre}[1]{\beta_{#1}} 
\newcommand{\elr}[1]{\gamma_{#1}} 
\newcommand{\err}[1]{\gamma'_{#1}} 
\newcommand{\flrd}[1]{\tilde{\rho}_{#1}} 
\newcommand{\dc}[1]{\tilde{\sigma}_{#1}} 
\newcommand{\cflr}[1]{\tilde{\tau}_{#1}} 
\newcommand{\frrc}[1]{\zeta_{#1}} 
\newcommand{\fec}[1]{\xi_{#1}} 
\newcommand{\comultind}[2]{\bar{\nu}_{#1,#2}} 
\newcommand{\pt}{\{\cdot\}} 
\newcommand{\radj}{^!} 
\newcommand{\hcomp}{\cdot} 
\newcommand{\ob}{{\rm Ob}} 
\newcommand{\ort}{^{\perp}} 
\begin{document}

\title[Exceptional sequences and derived autoequivalences]{
Exceptional sequences and derived autoequivalences}

\author[A. Canonaco]{Alberto Canonaco}
\address{Dipartimento di Matematica ``F. Casorati'', Universit{\`a}
di Pavia, Via Ferrata 1, 27100 Pavia, Italy}
\email{alberto.canonaco@unipv.it}

\keywords{Derived categories}

\subjclass[2000]{18E30}

\begin{abstract}
We prove a general theorem that gives a non trivial relation in the
group of derived autoequivalences of a variety (or stack) $X$, under
the assumption that there exists a suitable functor from the derived
category of another variety $Y$ admitting a full exceptional
sequence. Applications include the case in which $X$ is Calabi-Yau and
either $X$ is a hypersurface in $Y$ (this extends a previous result by
the author and R.~L. Karp, where $Y$ was a weighted projective space)
or $Y$ is a hypersurface in $X$.  The proof uses a resolution of the
diagonal of $Y$ constructed from the exceptional sequence.
\end{abstract}

\maketitle

\section{Introduction}\label{intro}

If $X$ is a smooth proper variety or stack, the group $\Aut(\D[b](X))$
of (isomorphism classes of) exact autoequivalences of $\D[b](X)$ (the
bounded derived category of coherent sheaves on $X$) is an interesting
object of study, in particular when $X$ is Calabi-Yau, i.e. when
$\ds_X\iso\so_X$ and $H^i(X,\so_X)=0$ for $0<i<\dim(X)$.  Obviously in
any case the following elements are in $\Aut(\D[b](X))$: shift
functors $(\farg)[n]$ for every integer $n$; pull-back functors $f^*$
for $f$ an automorphism of $X$; functors $\T{\s{F}}$, defined as
$\s{F}\otimes\farg$, for $\s{F}$ a line bundle on $X$. In fact, if
$\ds_X$ or $\ds_X^{-1}$ is ample, then these elements generate
$\Aut(\D[b](X))$, which is isomorphic to
$\Z\times(\Pic(X)\rtimes\Aut(X))$ (see \cite{BO}). On the other hand,
in general $\Aut(\D[b](X))$ is much bigger and its structure is rather
mysterious. However, it is known that, at least if $X$ is a smooth
projective variety (see \cite{O}) or more generally the smooth stack
associated to a normal projective variety with only quotient
singularities (see \cite{Ka}), then every autoequivalence of
$\D[b](X)$ is a {\em Fourier-Mukai functor}. An exact functor
$F\colon\D[b](Y)\to\D[b](X)$ is a Fourier-Mukai functor if there
exists an object $\Ker\in\D[b](X\times Y)$ (called {\em kernel} of
$F$) such that $F\iso\FM{\Ker}$, where $\FM{\Ker}$ denotes the functor
${\pr_1}_*(\Ker\otimes\pr_2^*(\farg))$.

Interesting examples of non trivial elements in $\Aut(\D[b](X))$ were
introduced in \cite{ST}, where it was proved that the Fourier-Mukai
functor $\ST{\s{F}}:=\FM{\cone{\s{F}\boxtimes\s{F}\dual\to\sod}}$
(where $\sod$ is the structure sheaf of the diagonal in $X\times X$,
the morphism is the natural one and $\cone{\farg}$ denotes the cone of
a morphism) is an autoequivalence of $\D[b](X)$ when
$\s{F}\in\D[b](X)$ is a {\em spherical object}, i.e. when
$\s{F}\otimes\ds_X\iso\s{F}$ and
\[\Hom_{\D[b](X)}(\s{F},\s{F}[k])\iso\begin{cases}
\K & \text{if $k=0,\dim(X)$} \\
0 & \text{otherwise}
\end{cases}\]
($\ST{\s{F}}$ is then called the {\em spherical twist} associated to
$\s{F}$). Clearly every line bundle on $X$ is a spherical object if
$X$ is Calabi-Yau. Spherical objects can also be constructed in some
common geometric settings starting with another smooth proper variety
(or stack) $Y$ and an {\em exceptional object} $\exc{}\in\D[b](Y)$,
i.e. such that
\[\Hom_{\D[b](Y)}(\exc{},\exc{}[k])\iso\begin{cases}
\K & \text{if $k=0$} \\
0 & \text{otherwise}
\end{cases}\]
(see \cite[Chapter 8]{H} for an account on spherical and exceptional
objects and relations between them).  For instance, if $f\colon X\mono
Y$ is the inclusion of a hypersurface such that $\ds_Y\iso\so_Y(-X)$,
then $f^*\exc{}\in\D[b](X)$ is spherical; also, if $g\colon Y\mono X$
is the inclusion of a hypersurface such that $g^*\ds_X\iso\so_Y$, then
$g_*\exc{}\in\D[b](X)$ is spherical. The aim of this paper is to find,
in similar situations, relations in $\Aut(\D[b](X))$ between the
spherical twists associated to the images of a {\em full exceptional
sequence} $(\exc{0},\dots,\exc{m})$ in $\D[b](Y)$ (this means that
each $\exc{i}$ is exceptional, that
$\Hom_{\D[b](Y)}(\exc{i},\exc{j}[k])=0$ for $i>j$ and for every
$k\in\Z$, and that $\D[b](Y)$ is generated by
$\{\exc{0},\dots,\exc{m}\}$ as a triangulated category). We were
motivated by the search for a general framework allowing to formulate
and prove relations in $\Aut(\D[b](X))$, including as particular cases
those proved in \cite{CKa} and \cite{K1} and those conjectured in
\cite{K1} and \cite{K2}, which we now recall briefly.

In \cite{CKa} it was proved that if $X\subset\Ps=\Ps(\w_0,\dots,\w_n)$
is a hypersurface of degree $\sw{\w}:=\w_0+\cdots+\w_n$ (hence $X$ is
Calabi-Yau), then the autoequivalence
$\funn:=\T{\so_X(1)}\comp\ST{\so_X}$ of $\D[b](X)$ satisfies
$\funn^{\comp\sw{\w}}\iso(\farg)[2]$. This is easily seen to be
equivalent to $\ST{\so_X(1)}\comp\cdots\comp\ST{\so_X(\sw{\w})}\iso
\T{\so_X(-\sw{\w})[2]}$, and, since
$(\so_{\Ps}(1),\dots,\so_{\Ps}(\sw{\w}))$ is a full exceptional
sequence in $\D[b](\Ps)$, it is natural to conjecture that more
generally
\begin{equation}\label{epb}
\ST{f^*\exc{0}}\comp\cdots\comp\ST{f^*\exc{m}}\iso\T{\so_X(-X)[2]}
\end{equation}
if, as above, $f\colon X\mono Y$ is the inclusion of a hypersurface
such that $\ds_Y\iso\so_Y(-X)$.

On the other hand, some of the relations in \cite{K1} and \cite{K2}
(proved when $Y$ is $\Ps^1$ and conjectured when $Y$ is $\Ps^2$ or the
Hirzebruch surface $\F_3$) are equivalent to particular cases of the
following statement: if $g\colon Y\mono X$ is the inclusion of a
hypersurface such that $g^*\ds_X\iso\so_Y$, then
\begin{equation}\label{epo}
\ST{g_*\exc{0}}\comp\cdots\comp\ST{g_*\exc{m}}\iso\T{\so_X(Y)}.
\end{equation}

In this paper we prove both \eqref{epb} and \eqref{epo} (cor.
\ref{pullback} and \ref{pushout}) as consequences of the following
more general result (theorem \ref{mthm}): given a Fourier-Mukai
functor $\fun\iso\FM{\Ker}\colon\D[b](Y)\to\D[b](X)$ satisfying
condition \eqref{maincond} (which essentially says that $F$ acts fully
faithfully on the non trivial part of the morphisms between the terms
of the exceptional sequence), we have
\begin{equation}\label{emt}
\ST{\fun(\exc{0})}\comp\cdots\comp\ST{\fun(\exc{m})}\iso
\FM{\cone{\fundiag}},
\end{equation}
where $\fundiag$ is a morphism in $\D[b](X\times X)$ naturally induced
by $\Ker$ (see section \ref{comp} for the precise definition). Here it
is enough to say that one can construct a functor
$\funp\colon\D[b](Y\times Y)\to\D[b](X\times X)$ and that $\fundiag$
can be identified with a natural morphism $\funp(\sod[Y])\to\sod[X]$.

Our strategy of proof is similar to that of \cite[Theorem 1.1]{CKa}
and is based on the use of a suitable resolution of the diagonal for
$Y$. Namely, denoting by $(\excd{m},\dots,\excd{0})$ the dual
exceptional sequence of $(\exc{0},\dots\exc{m})$ (characterized
by $\Hom_{\D[b](Y)}(\exc{i},\excd{j}[k])\iso
\K^{\delta_{i,j}\delta_{k,0}}$ for $0\le i,j\le m$ and for every
$k\in\Z$), objects $\lr{k}\in\D[b](Y\times Y)$ for $0\le k\le m$ are
defined with the following properties:
\begin{enumerate}
\item $\lr{0}\iso\excd{0}\boxtimes\exc{0}\dual$;
\item there is a distinguished triangle
$\lr{k-1}\mor{\lrlr{k}}\lr{k}\mor{\lre{k}}
\excd{k}\boxtimes\exc{k}\dual\mor{\elr{k}}\lr{k-1}[1]$ for $0<k\le m$;
\item $\lr{m}\iso\sod$.
\end{enumerate}
In \cite{CKa}, where $Y=\Ps$ and $\exc{i}=\so_{\Ps}(i)$, this is
achieved by induction on $k$, defining explicitly the morphism
$\elr{k}$ (the notation of \cite{CKa} is actually different), and then
the difficult part is to show that (3) holds. This approach seems hard
to follow in the general case. Instead, using suitable semiorthogonal
decompositions of $\D[b](Y\times Y)$, in section \ref{resol} we can
define directly objects $\lr{k}$ such that (3) is automatically
satisfied, and then prove that (1) and (2) hold using the fact that
there is a natural way to define $\lrlr{k}$. Then the idea for the
proof of \eqref{emt} is simply to show by induction on $k$ that
$\ST{\fun(\exc{0})}\comp\cdots\comp\ST{\fun(\exc{k})}\iso
\FM{\cone{\funp(\lr{k})\to\sod[X]}}$. Here some of the difficulties
come from the fact that at several points one has to check that the
morphisms involved are the ``right'' ones. In most cases this is just
a matter of checking that some natural diagrams commute: to this
purpose in section \ref{comp} we prove some general properties about
compositions of kernels of Fourier-Mukai functors, which allow to get
in an efficient way the needed commutative diagrams. A more delicate
problem is that, due to the non functoriality of the cone, not all the
morphisms are natural: to deal with this question we use a property
(proved in \cite{CKu} and recalled in the appendix) of triangulated
categories satisfying a suitable finiteness condition.

\subsection*{Acknowledgements}
It is a pleasure to thank Robert L. Karp for useful conversations and
for drawing my attention to his conjectures.

\subsection*{Notation}
We work over a fixed base field $\K$.  As in \cite{CKa} for simplicity
we will call {\em stack} a connected Deligne-Mumford stack which is
smooth and proper over $\K$, and such that every coherent sheaf is a
quotient of a locally free sheaf of finite rank. 

We will write $\pt$ for the stack $\spec\K$.  If $X$ is a stack,
$\D[b](X)$ will denote the bounded derived category of coherent
sheaves on $X$. Our definition of stack implies that tensor product
admits a left derived functor
$\farg\otimes\farg\colon\D[b](X)\times\D[b](X)\to\D[b](X)$ and that
for every morphism of stacks $f\colon X\to Y$ there are (left and
right) derived functors $f^*\colon\D[b](Y)\to\D[b](X)$ and
$f_*\colon\D[b](X)\to\D[b](Y)$ (we use this notation since we never
need to distinguish between these functors and the corresponding
underived versions). Also the functors $\Hom_X$ and $\sHom_X$ admit
derived functors
$\Hom_X(\farg,\farg)\colon\D[b](X)\opp\times\D[b](X)\to\D[b](\pt)$
(hence with this convention
\[\Hom_X(\s{F},\s{F}')\iso
\bigoplus_{k\in\Z}\Hom_{\D[b](X)}(\s{F},\s{F}'[k])[-k]\]
for $\s{F},\s{F}'\in\D[b](X)$) and
$\sHom_X(\farg,\farg)\colon\D[b](X)\opp\times\D[b](X)\to\D[b](X)$; for
$\s{F}\in\D[b](X)$ we set $\s{F}\dual:=\sHom_X(\s{F},\so_X)$.

For a stack $X$, $\diag\colon X\to X\times X$ will be the diagonal
morphism; we will often write $\sod[X]$ (or simply $\sod$) instead of
$\diag_*\so_X$.  Denoting as usual by $\ds_X$ the dualizing sheaf on
$X$, we define $\dss_X:=\ds_X[\dim(X)]$.  For a morphism of stacks
$f\colon X\to Y$, $f^!\colon\D[b](Y)\to\D[b](X)$ denotes the functor
$f^*(\farg)\otimes\dss_X\otimes f^*{\dss_Y}\dual$.

If $X$ and $Y$ are two stacks, we will denote by $\pr_1\colon X\times
Y\to X$ and $\pr_2\colon X\times Y\to Y$ the projections and by
$\farg\boxtimes\farg$ the exterior (derived) tensor
product\footnote{In \cite{CKa} the opposite convention
$\farg\boxtimes\farg:=\pr_2^*(\farg)\otimes\pr_1^*(\farg)$ is used.}
\[\pr_1^*(\farg)\otimes\pr_2^*(\farg)
\colon\D[b](X)\times\D[b](Y)\to\D[b](X\times Y).\]

We refer to the appendix for notations and conventions about
triangulated categories and exact functors between them.

\section{Composition of kernels}\label{comp}

$X$, $Y$, $Z$ and $W$ will be stacks.
For $\Ker\in\D[b](X\times Y)$ and $\Kerr\in\D[b](Y\times Z)$, we set
\[\Ker\FMcomp\Kerr:=
{\pr_{1,3}}_*(\pr_{1,2}^*\Ker\otimes\pr_{2,3}^*\Kerr)\in
\D[b](X\times Z)\] 
(where $\pi_{i,j}$ denotes the projection from $X\times Y\times Z$
onto the $i^{\rm th}\times j^{\rm th}$ factor). Notice that, given
$\s{F}\in\D[b](X)$ and $\s{G}\in\D[b](Y)$, identifying $X$ with
$X\times\pt$ and $Y$ with $\pt\times Y$, $\s{F}\FMcomp\s{G}$
coincides with $\s{F}\boxtimes\s{G}$. 

Clearly for every $\Ker\in\D[b](X\times Y)$ and for every $Z$ there
are exact functors
\[\Ker\FMcomp\farg\colon\D[b](Y\times Z)\to\D[b](X\times Z)
\text{\qquad and\qquad}
\farg\FMcomp\Ker\colon\D[b](Z\times X)\to\D[b](Z\times Y).\]
In particular, when $Z=\pt$, $\Ker\FMcomp\farg$, respectively
$\farg\FMcomp\Ker$ can be identified with the Fourier-Mukai functor
with kernel $\Ker$ 
\[{\pr_1}_*(\Ker\otimes\pr_2^*(\farg))\colon\D[b](Y)\to\D[b](X),
\text{ respectively }
{\pr_2}_*(\Ker\otimes\pr_1^*(\farg))\colon\D[b](X)\to\D[b](Y),\]
which will be denoted by $\FM{\Ker}$, respectively $\FMa{\Ker}$.

The following result shows that, up to isomorphism, the operation
$\FMcomp$ is associative and the objects $\sod$ act as identities:
since we do not know a reference for this fact, we include a proof
here.

\begin{lemm}\label{assoc}
For every $\Ker\in\D[b](X\times Y)$ there are natural isomorphisms
$\sod[X]\FMcomp\Ker\iso\Ker\iso\Ker\FMcomp\sod[Y]$. Moreover, given
also $\Kerr\in\D[b](Y\times Z)$ and
$\Kerrr\in\D[b](Z\times W)$, there is a natural isomorphism
$(\Ker\FMcomp\Kerr)\FMcomp\Kerrr\iso\Ker\FMcomp(\Kerr\FMcomp\Kerrr)$
in $\D[b](X\times W)$.
\end{lemm}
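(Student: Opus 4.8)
The plan is to prove both assertions by a systematic use of flat base change and the projection formula, which apply freely here because every projection between finite products of our stacks is smooth --- hence flat --- and proper.

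For the unit isomorphisms I would argue as follows. Write $\sod[X]\FMcomp\Ker={\pr_{1,3}}_*(\pr_{1,2}^*\sod[X]\otimes\pr_{2,3}^*\Ker)$, with the $\pr$'s the projections from $X\times X\times Y$. The fibre product of $\diag_X\colon X\to X\times X$ and $\pr_{1,2}\colon X\times X\times Y\to X\times X$ is $X\times Y$, with projection to $X\times X\times Y$ given by $\diag_X\times\id_Y\colon(x,y)\mapsto(x,x,y)$; flat base change therefore gives $\pr_{1,2}^*\sod[X]\iso(\diag_X\times\id_Y)_*\so_{X\times Y}$. The projection formula then identifies $\pr_{1,2}^*\sod[X]\otimes\pr_{2,3}^*\Ker$ with $(\diag_X\times\id_Y)_*\bigl((\pr_{2,3}\comp(\diag_X\times\id_Y))^*\Ker\bigr)$, and since $\pr_{2,3}\comp(\diag_X\times\id_Y)=\id_{X\times Y}=\pr_{1,3}\comp(\diag_X\times\id_Y)$, applying ${\pr_{1,3}}_*$ returns $\Ker$. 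The isomorphism $\Ker\FMcomp\sod[Y]\iso\Ker$ follows the same pattern, pulling $\diag_Y$ back along $X\times Y\times Y\to Y\times Y$. Naturality in $\Ker$ is automatic, every functor and every base-change or projection-formula isomorphism used being natural.

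For associativity the plan is to show that both $(\Ker\FMcomp\Kerr)\FMcomp\Kerrr$ and $\Ker\FMcomp(\Kerr\FMcomp\Kerrr)$ are naturally isomorphic to the ``symmetric'' object
\[{p_{1,4}}_*\bigl(p_{1,2}^*\Ker\otimes p_{2,3}^*\Kerr\otimes p_{3,4}^*\Kerrr\bigr)\in\D[b](X\times W),\]
where $p_{i_1\dots i_k}$ denotes the projection of $X\times Y\times Z\times W$ onto the product of the factors with the listed indices. For the left-hand side, unwind the outer $\FMcomp$ over $X\times Z\times W$ and the inner one over $X\times Y\times Z$: the fibre product over $X\times Z$ of the projections $X\times Y\times Z\to X\times Z$ and $X\times Z\times W\to X\times Z$ is $X\times Y\times Z\times W$, with structure maps $p_{1,2,3}$ and $p_{1,3,4}$, so flat base change moves the outer pullback past the inner pushforward; the projection formula then absorbs the outstanding factor built from $\Kerrr$; rewriting each pulled-back kernel as $p_{i,j}^*$ of the corresponding kernel (since composites of projections are projections) and collapsing the two resulting pushforwards into ${p_{1,4}}_*$ by functoriality of $(\farg)_*$ gives the claim. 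The right-hand side is handled identically, now via the fibre product over $Y\times W$ of $Y\times Z\times W\to Y\times W$ and $X\times Y\times W\to Y\times W$ (with structure maps $p_{2,3,4}$ and $p_{1,2,4}$); composing the two chains of isomorphisms produces the asserted natural isomorphism.

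Every individual step here is standard, so the real work is bookkeeping: one must keep track of which projection is which on the triple and the fourfold products, confirm that each base-change square is genuinely Cartesian (automatic, as a fibre product of products of stacks is again the product of the constituent factors), and check that the numerous composites of projections that occur are the evident ones --- these are the only spots where something could go astray. I therefore expect this notational organisation, rather than any conceptual point, to be the main obstacle; and since the isomorphisms constructed are assembled entirely from canonical natural isomorphisms, any coherence one might later need --- e.g.\ compatibility with further iterations of $\FMcomp$ --- comes for free.
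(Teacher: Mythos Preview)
Your proposal is correct and follows essentially the same route as the paper's proof: both parts are handled by flat base change plus the projection formula, with the unit isomorphisms obtained via the cartesian square pulling $\diag_X$ back along $\pr_{1,2}$ and the associativity obtained by identifying each side with the symmetric object ${p_{1,4}}_*(p_{1,2}^*\Ker\otimes p_{2,3}^*\Kerr\otimes p_{3,4}^*\Kerrr)$ through the cartesian square $X\times Y\times Z\times W\to X\times Z$. The paper's argument is the same in substance, differing only in presentation.
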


\begin{proof}
Applying flat base change to the cartesian square
\[\xymatrix{X\times Y \ar[rr]^-{\tilde{\diag}}
\ar[d]_{\pr_1} & & X\times X\times Y \ar[d]^{\pr_{1,2}} \\
X \ar[rr]_-{\diag} & & X\times X}\]
and using projection formula for $\tilde{\diag}$, we obtain
\[\sod[X]\FMcomp\Ker=
{\pr_{1,3}}_*(\pr_{1,2}^*\diag_*\so_X\otimes\pr_{2,3}^*\Ker)\iso
{\pr_{1,3}}_*(\tilde{\diag}_*\so_{X\times Y}\otimes\pr_{2,3}^*\Ker)
\iso{\pr_{1,3}}_*\tilde{\diag}_*\tilde{\diag}^*\pr_{2,3}^*\Ker,\]
and the last term is isomorphic to $\Ker$ because
$\pr_{1,3}\comp\tilde{\diag}=\pr_{2,3}\comp\tilde{\diag}=
\id_{X\times Y}$. In a completely similar way one proves that also
$\Ker\FMcomp\sod[Y]\iso\Ker$.

In order to prove the second statement, we will denote by $\pr_{i,j}$
and $\pr_{i,j,k}$ the obvious projections from $X\times Y\times
Z\times W$ and, for $V$ one of $X$, $Y$, $Z$ and $W$, by $\pr^V_{i,j}$
the obvious projections from the product of all the four terms except
$V$. Applying flat base change to the cartesian square
\[\xymatrix{X\times Y\times Z\times W \ar[rr]^-{\pr_{1,3,4}}
\ar[d]_{\pr_{1,2,3}} & & X\times Z\times W \ar[d]^{\pr^Y_{1,2}} \\
X\times Y\times Z \ar[rr]_-{\pr^W_{1,3}} & & X\times Z,}\]
using projection formula for $\pr_{1,3,4}$ and taking into account
that $\pr^Y_{1,3}\comp\pr_{1,3,4}=\pr_{1,4}$,
$\pr^W_{1,2}\comp\pr_{1,2,3}=\pr_{1,2}$,
$\pr^W_{2,3}\comp\pr_{1,2,3}=\pr_{2,3}$ and
$\pr^Y_{2,3}\comp\pr_{1,3,4}=\pr_{3,4}$, we obtain
\begin{multline*}
(\Ker\FMcomp\Kerr)\FMcomp\Kerrr=
{\pr^Y_{1,3}}_*({\pr^Y_{1,2}}^*
{\pr^W_{1,3}}_*({\pr^W_{1,2}}^*\Ker\otimes{\pr^W_{2,3}}^*\Kerr)
\otimes{\pr^Y_{2,3}}^*\Kerrr) \\
\iso{\pr^Y_{1,3}}_*({\pr_{1,3,4}}_*
{\pr_{1,2,3}}^*({\pr^W_{1,2}}^*\Ker\otimes{\pr^W_{2,3}}^*\Kerr)
\otimes{\pr^Y_{2,3}}^*\Kerrr) \\
\iso{\pr^Y_{1,3}}_*{\pr_{1,3,4}}_*
({\pr_{1,2,3}}^*({\pr^W_{1,2}}^*\Ker\otimes{\pr^W_{2,3}}^*\Kerr)
\otimes{\pr_{1,3,4}}^*{\pr^Y_{2,3}}^*\Kerrr) \\
\iso{\pr_{1,4}}_*(\pr_{1,2}^*\Ker\otimes\pr_{2,3}^*\Kerr\otimes
\pr_{3,4}^*\Kerrr).
\end{multline*}
In a completely similar way one proves that also
\[\Ker\FMcomp(\Kerr\FMcomp\Kerrr)\iso{\pr_{1,4}}_*
(\pr_{1,2}^*\Ker\otimes\pr_{2,3}^*\Kerr\otimes\pr_{3,4}^*\Kerrr).\]
\end{proof}

\begin{rema}\label{FMcomp}
When $W=\pt$ the above result yields the well known fact (see e.g.
\cite[Prop. 5.10]{H}) that 
$\FM{\Ker}\comp\FM{\Kerr}\iso\FM{\Ker\FMcomp\Kerr}\colon
\D[b](Z)\to\D[b](X)$.
\end{rema}

From \ref{assoc} it is easy to deduce various results (like
\cite[Lemma 2.2]{CKa}) about Fourier-Mukai functors with kernels of the
form $\s{F}\boxtimes\s{G}$. In particular we will need the following fact.

\begin{coro}\label{splitker}
Given $\s{F},\s{F}'\in\D[b](X)$ and $\s{G},\s{G}'\in\D[b](Y)$, there are
natural isomorphisms $\FM{\s{F}\boxtimes\s{G}}(\s{G}')\iso
\s{F}\otimes_{\K}\Hom_Y(\s{G}\dual,\s{G}')$ and
$\FMa{\s{F}\boxtimes\s{G}}(\s{F}')\iso
\s{G}\otimes_{\K}\Hom_X(\s{F}\dual,\s{F}')$.
\end{coro}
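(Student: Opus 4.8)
The plan is to reduce everything to Lemma \ref{assoc}, exploiting the two identifications recorded just before it: for $Z=\pt$ the functor $\Ker\FMcomp\farg$ is $\FM{\Ker}$ and $\farg\FMcomp\Ker$ is $\FMa{\Ker}$, and, under $X=X\times\pt$ and $Y=\pt\times Y$, one has $\s{F}\FMcomp\s{G}=\s{F}\boxtimes\s{G}$. Accordingly I would first write, with $\s{F}\in\D[b](X\times\pt)$, $\s{G}\in\D[b](\pt\times Y)$ and $\s{G}'\in\D[b](Y\times\pt)$,
\[\FM{\s{F}\boxtimes\s{G}}(\s{G}')=(\s{F}\FMcomp\s{G})\FMcomp\s{G}'
\iso\s{F}\FMcomp(\s{G}\FMcomp\s{G}'),\]
the last isomorphism being the associativity isomorphism of Lemma \ref{assoc}.

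The next step is to identify the two outer factors of this expression concretely. On one hand $\s{G}\FMcomp\s{G}'\in\D[b](\pt\times\pt)=\D[b](\pt)$ is, unwinding the definition of $\FMcomp$, the complex $\pi_*(\s{G}\otimes\s{G}')$ for $\pi\colon Y\to\pt$ the structure morphism, i.e. $\Hom_Y(\so_Y,\s{G}\otimes\s{G}')$ in the paper's notation ($\Hom_Y$ being the pushforward to $\pt$ of $\sHom_Y$); since $Y$ is smooth, $\s{G}$ is a perfect complex, so biduality gives $\s{G}\otimes\s{G}'\iso\sHom_Y(\s{G}\dual,\s{G}')$ and hence $\s{G}\FMcomp\s{G}'\iso\Hom_Y(\s{G}\dual,\s{G}')$. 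On the other hand, for any $V\in\D[b](\pt)$ the object $\s{F}\FMcomp V$ is (the $Y=\pt$ case of the observation before Lemma \ref{assoc}) $\s{F}\boxtimes V=\pr_1^*\s{F}\otimes\pr_2^*V=\s{F}\otimes_{\K}V$ on $X\times\pt=X$. Combining, $\FM{\s{F}\boxtimes\s{G}}(\s{G}')\iso\s{F}\otimes_{\K}\Hom_Y(\s{G}\dual,\s{G}')$, which is the first isomorphism, and it is natural in all four arguments because the associativity isomorphism, biduality, and the one-point identifications are all natural. The second isomorphism follows by the symmetric computation
\[\FMa{\s{F}\boxtimes\s{G}}(\s{F}')=\s{F}'\FMcomp(\s{F}\FMcomp\s{G})
\iso(\s{F}'\FMcomp\s{F})\FMcomp\s{G}\iso\s{G}\otimes_{\K}\Hom_X(\s{F}\dual,\s{F}'),\]
using $\s{F}'\FMcomp\s{F}\iso\Hom_X(\so_X,\s{F}\otimes\s{F}')\iso\Hom_X(\s{F}\dual,\s{F}')$ (again by biduality) and $V\FMcomp\s{G}\iso\s{G}\otimes_{\K}V$ for $V\in\D[b](\pt)$.

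Most of this is routine bookkeeping of the identifications $\pt\times\pt=\pt$, $X\times\pt=X$, $\pt\times Y=Y$ together with flat base change and the projection formula, already packaged inside Lemma \ref{assoc}. The only step that needs a little care is the identification of $\s{G}\FMcomp\s{G}'$ (resp. $\s{F}'\FMcomp\s{F}$) with a derived $\Hom$ complex: one must keep track of the paper's definition of $\Hom_Y(\farg,\farg)$ as a functor landing in $\D[b](\pt)$ and invoke biduality for perfect complexes, $\sHom_Y(\s{G}\dual,\s{G}')\iso\s{G}\otimes\s{G}'$, to replace a $\Hom_Y(\so_Y,\farg)$ by a $\Hom_Y(\s{G}\dual,\farg)$. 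I do not expect any genuine obstacle beyond this.
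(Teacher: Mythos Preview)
Your proposal is correct and follows essentially the same route as the paper's proof: write $\FM{\s{F}\boxtimes\s{G}}(\s{G}')\iso(\s{F}\FMcomp\s{G})\FMcomp\s{G}'\iso\s{F}\FMcomp(\s{G}\FMcomp\s{G}')$ via Lemma~\ref{assoc}, then identify $\s{G}\FMcomp\s{G}'\in\D[b](\pt)$ with $\Hom_Y(\s{G}\dual,\s{G}')$, and similarly for the second isomorphism. You spell out the biduality step and the identification $\s{F}\FMcomp V\iso\s{F}\otimes_{\K}V$ for $V\in\D[b](\pt)$, which the paper leaves implicit, but the argument is the same.
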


\begin{proof}
We have $\FM{\s{F}\boxtimes\s{G}}(\s{G}')\iso
(\s{F}\FMcomp\s{G})\FMcomp\s{G}'\iso\s{F}\FMcomp(\s{G}\FMcomp\s{G}')$,
where we consider $\s{G}\in\D[b](\pt\times X)$ and
$\s{G}'\in\D[b](X\times\pt)$; it is then enough to note that in this
case
$\s{G}\FMcomp\s{G}'\iso\Hom_Y(\s{G}\dual,\s{G}')\in\D[b](\pt)$. The
other proof is similar.
\end{proof}

From now on by abuse of notation we will often treat as equalities the
natural isomorphisms given by \ref{assoc}.

For $\Ker\in\D[b](X\times Y)$ we set
$\Kerd:=\Ker\dual\otimes\pr_2^*\dss_Y$, but considering it as an
object of $\D[b](Y\times X)$. Notice that, given $\s{F}\in\D[b](X)$,
identifying $X$ with $X\times\pt$, $\s{F}\tdual$ coincides with
$\s{F}\dual$.

\begin{lemm}\label{adjoint}
For every $\Ker\in\D[b](X\times Y)$ and for every other stack $Z$ the
functor
\[\Ker\FMcomp\farg\colon\D[b](Y\times Z)\to\D[b](X\times Z),
\text{ respectively }
\farg\FMcomp\Ker\colon\D[b](Z\times X)\to\D[b](Z\times Y)\]
is left, respectively right adjoint of the functor
\[\Kerd\FMcomp\farg\colon\D[b](X\times Z)\to\D[b](Y\times Z),
\text{ respectively }
\farg\FMcomp\Kerd\colon\D[b](Z\times Y)\to\D[b](Z\times X).\]
\end{lemm}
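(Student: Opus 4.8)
The plan is to deduce both statements from a single natural isomorphism of morphism spaces, spelling out the left adjoint case in detail (the right adjoint case being entirely symmetric). Concretely, for $\s{A}\in\D[b](Y\times Z)$ and $\s{B}\in\D[b](X\times Z)$ I will construct an isomorphism
\[\Hom_{X\times Z}(\Ker\FMcomp\s{A},\s{B})\iso\Hom_{Y\times Z}(\s{A},\Kerd\FMcomp\s{B}),\]
natural in $\s{A}$ and $\s{B}$; being a composite of natural isomorphisms it is automatically compatible with shifts, which is exactly what is needed to conclude that $\Ker\FMcomp\farg$ is left adjoint to $\Kerd\FMcomp\farg$.

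This isomorphism will be built as a chain of standard adjunctions on the triple product $X\times Y\times Z$; I write $\pr_{1,2},\pr_{2,3},\pr_{1,3}$ for the projections onto the indicated pairs of factors (as in the definition of $\FMcomp$) and $q$ for the projection onto the middle factor $Y$. Unwinding $\Ker\FMcomp\s{A}={\pr_{1,3}}_*(\pr_{1,2}^*\Ker\otimes\pr_{2,3}^*\s{A})$ and using that $\pr_{1,3}$ is proper (being the base change of $Y\to\pt$, with $Y$ proper over $\K$), Grothendieck--Verdier duality --- for a proper morphism between smooth stacks, $g^!$ as defined above is the right adjoint of $g_*$ --- identifies $\Hom_{X\times Z}(\Ker\FMcomp\s{A},\s{B})$ with $\Hom_{X\times Y\times Z}(\pr_{1,2}^*\Ker\otimes\pr_{2,3}^*\s{A},\pr_{1,3}^!\s{B})$. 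Since $\pr_{1,3}$ is the projection killing the smooth proper factor $Y$, one computes $\pr_{1,3}^!\s{B}\iso\pr_{1,3}^*\s{B}\otimes q^*\dss_Y$ directly from the formula defining $g^!$ together with the multiplicativity of $\dss$ under products. Next, $X\times Y$ being a smooth stack, $\Ker$ is perfect, so $\farg\otimes\pr_{1,2}^*\Ker$ is adjoint on both sides to $\farg\otimes\pr_{1,2}^*(\Ker\dual)$; moving $\pr_{1,2}^*\Ker$ across and then applying the adjunction $({\pr_{2,3}}^*,{\pr_{2,3}}_*)$ rewrites the group as $\Hom_{Y\times Z}(\s{A},{\pr_{2,3}}_*(\pr_{1,2}^*(\Ker\dual)\otimes q^*\dss_Y\otimes\pr_{1,3}^*\s{B}))$.

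It remains to recognise ${\pr_{2,3}}_*(\pr_{1,2}^*(\Ker\dual)\otimes q^*\dss_Y\otimes\pr_{1,3}^*\s{B})$ as $\Kerd\FMcomp\s{B}$. Unwinding the definitions of $\Kerd=\Ker\dual\otimes\pr_2^*\dss_Y$ (regarded as an object of $\D[b](Y\times X)$) and of $\FMcomp$ on $Y\times X\times Z$, and transporting everything to $X\times Y\times Z$ along the isomorphism swapping the first two factors, the two objects become literally equal; composing the displayed isomorphisms then gives the claim. For the second (right adjoint) statement I will run the mirror argument, using $(g_*,g^!)$ and $(g^*,g_*)$ with their roles interchanged, or deduce it from the first using the compatibility of $\FMcomp$ with transposition of factors. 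I expect the only real work --- as opposed to a genuine difficulty --- to be bookkeeping: keeping track of the successive reorderings of the three factors, and checking carefully that the relative dualizing complex of $\pr_{1,3}$ is $q^*\dss_Y$.
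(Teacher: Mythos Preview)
Your proposal is correct and follows essentially the same approach as the paper: both arguments use the three adjunctions $({\pr_{1,3}}_*,\pr_{1,3}^!)$, $(\pr_{1,2}^*\Ker\otimes\farg,(\pr_{1,2}^*\Ker)\dual\otimes\farg)$ and $(\pr_{2,3}^*,{\pr_{2,3}}_*)$ together with the identification $\pr_{1,3}^!(\farg)\iso\pr_{1,3}^*(\farg)\otimes q^*\dss_Y$. The only difference is packaging: the paper writes $\Ker\FMcomp\farg={\pr_{1,3}}_*\comp(\pr_{1,2}^*\Ker\otimes\farg)\comp\pr_{2,3}^*$ as a composite of three functors and reads off the right adjoint as the reverse composite of the individual right adjoints, whereas you unwind the same chain at the level of $\Hom$-spaces.
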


\begin{proof}
By definition $\Ker\FMcomp\farg$ is the composition of the functors
$\pr_{2,3}^*$, $\pr_{1,2}^*\Ker\otimes\farg$ and
${\pr_{1,3}}_*$. Remembering that, if $f$ is a morphism of stacks, the
right adjoints of $f^*$ and $f_*$ are respectively $f_*$ and $f^!$,
and that the right adjoint of $\s{F}\otimes\farg$ is
$\s{F}\dual\otimes\farg$, we conclude that the right adjoint of
$\Ker\FMcomp\farg$ is
\begin{multline*}
{\pr_{2,3}}_*((\pr_{1,2}^*\Ker)\dual\otimes\pr_{1,3}^!(\farg))\iso
{\pr_{2,3}}_*(\pr_{1,2}^*\Ker\dual\otimes\dss_{X\times Y\times Z}
\otimes(\pr_{1,3}^*\dss_{X\times Z})\dual\otimes\pr_{1,3}^*(\farg))\\
\iso{\pr_{2,3}}_*(\pr_{1,2}^*\Kerd\otimes\pr_{1,3}^*(\farg))\iso
\Kerd\FMcomp\farg.
\end{multline*}
In a completely similar way one proves that the left ajoint of
$\farg\FMcomp\Ker$ is $\farg\FMcomp\Kerd$.
\end{proof}

\begin{rema}\label{FMadj}
When $Z=\pt$ the above result says that
$\FM{\Ker}\colon\D[b](Y)\to\D[b](X)$, respectively
$\FMa{\Ker}\colon\D[b](X)\to\D[b](Y)$ is left, respectively right
adjoint of $\FM{\Kerd}\colon\D[b](X)\to\D[b](Y)$, respectively
$\FMa{\Kerd}\colon\D[b](Y)\to\D[b](X)$ (see e.g. \cite[Prop. 5.9]{H}).
\end{rema}

For every $\Ker\in\D[b](X\times Y)$ by \ref{adjoint} there are natural
isomorphisms
\[\Hom_{\D[b](X\times X)}(\Ker\FMcomp\Kerd,\sod[X])\iso
\Hom_{\D[b](X\times Y)}(\Ker,\Ker)\iso
\Hom_{\D[b](Y\times Y)}(\sod[Y],\Kerd\FMcomp\Ker),\]
and we denote by $\mult{\Ker}\colon\Ker\FMcomp\Kerd\to\sod[X]$ and 
$\comult{\Ker}\colon\sod[Y]\to\Kerd\FMcomp\Ker$ the morphisms
corresponding to $\id_{\Ker}$. In particular, given
$\s{F}\in\D[b](X)$, identifying as usual $X$ with $X\times\pt$ and
$\s{F}\boxtimes\s{F}\dual$ with $\s{F}\FMcomp\s{F}\tdual$,
$\mult{\s{F}}$ coincides with the natural morphism
$\s{F}\boxtimes\s{F}\dual\to\sod[X]$. Given also
$\s{A}\in\D[b](X\times X)$, we define
\begin{equation}\label{multind}
\multind{\s{A}}{\s{F}}\colon\FM{\s{A}}(\s{F})\boxtimes\s{F}\dual\iso
\s{A}\FMcomp(\s{F}\boxtimes\s{F}\dual)
\mor{\id_{\s{A}}\FMcomp\mult{\s{F}}}\s{A}
\end{equation}
Clearly the morphism $\multind{\s{A}}{\s{F}}$ is functorial in
$\s{A}$, meaning that for every morphism $\alpha\colon\s{A}\to\s{A}'$
in $\D[b](X\times X)$ there is a commutative diagram
\begin{equation}\label{natmultind}
\xymatrix{\FM{\s{A}}(\s{F})\boxtimes\s{F}\dual
\ar[rr]^-{\multind{\s{A}}{\s{F}}}
\ar[d]_{\FM{\alpha}(\s{F})\boxtimes\id} & &
\s{A} \ar[d]^{\alpha} \\
\FM{\s{A}'}(\s{F})\boxtimes\s{F}\dual
\ar[rr]_-{\multind{\s{A}'}{\s{F}}} & & \s{A}'.}
\end{equation}

\begin{prop}\label{compat}
Given $\Ker\in\D[b](X\times Y)$ and $\Kerr\in\D[b](Y\times Z)$, we
have:
\begin{enumerate}
\item There is a natural isomorphism $(\Ker\FMcomp\Kerr)\tdual\iso
\Kerrd\FMcomp\Kerd$ in $\D[b](Z\times X)$.
\item The diagram
\[\xymatrix{
\Ker\FMcomp\Kerr\FMcomp\Kerrd\FMcomp\Kerd \ar[rr]^-{\sim}
\ar[d]_{\id_{\Ker}\FMcomp\mult{\Kerr}\FMcomp\id_{\Kerd}} & &
(\Ker\FMcomp\Kerr)\FMcomp(\Ker\FMcomp\Kerr)\tdual
\ar[d]^{\mult{\Ker\FMcomp\Kerr}} \\
\Ker\FMcomp\Kerd \ar[rr]_-{\mult{\Ker}} & & \sod[X]
}\]
(where the top horizontal isomorphism is given by (1)) commutes.
\item The compositions
\begin{gather*}
\Ker\mor{\id_{\Ker}\FMcomp\comult{\Ker}}
\Ker\FMcomp\Kerd\FMcomp\Ker\mor{\mult{\Ker}\FMcomp\id_{\Ker}}\Ker, \\
\Kerd\mor{\comult{\Ker}\FMcomp\id_{\Kerd}}
\Kerd\FMcomp\Ker\FMcomp\Kerd\mor{\id_{\Kerd}\FMcomp\mult{\Ker}}\Kerd
\end{gather*}
coincide with $\id_{\Ker}$ and $\id_{\Kerd}$.
\item The diagram
\[\xymatrix{
\Kerr\FMcomp\Kerrd\FMcomp\Kerd \ar[rr]^-{\sim}
\ar[d]_{\mult{\Kerr}\FMcomp\id_{\Kerd}} & &
\Kerr\FMcomp(\Ker\FMcomp\Kerr)\tdual
\ar[d]^{\comult{\Ker}\FMcomp\id_{\Kerr\FMcomp(\Ker\FMcomp\Kerr)\tdual}} \\
\Kerd & & \Kerd\FMcomp\Ker\FMcomp\Kerr\FMcomp(\Ker\FMcomp\Kerr)\tdual
\ar[ll]^-{\id_{\Kerd}\FMcomp\mult{\Ker\FMcomp\Kerr}}
}\]
(where the top horizontal isomorphism is given by (1)) commutes.
\end{enumerate}
\end{prop}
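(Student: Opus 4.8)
The conceptual point is that lemmas \ref{assoc} and \ref{adjoint} together say that stacks, objects of the categories $\D[b](X\times Y)$, the operation $\FMcomp$ and the objects $\sod$ organize into a bicategory in which $\Kerd$ is a right adjoint of $\Ker$ with unit $\comult{\Ker}$ and counit $\mult{\Ker}$; statements (1)--(4) are then, respectively, the standard facts that the right adjoint of a composite is the composite of the right adjoints taken in the reverse order, that the counit of a composite adjunction is the pasting of the counits of the factors, and that units and counits satisfy the triangle identities. To make this concrete I would first establish the following description of the adjunctions of lemma \ref{adjoint}: for every stack $Z$, the counit of $\Ker\FMcomp\farg\dashv\Kerd\FMcomp\farg$ at $\s{A}\in\D[b](X\times Z)$ is $\mult{\Ker}\FMcomp\id_{\s{A}}\colon\Ker\FMcomp\Kerd\FMcomp\s{A}\to\sod[X]\FMcomp\s{A}=\s{A}$, its unit at $\s{B}\in\D[b](Y\times Z)$ is $\comult{\Ker}\FMcomp\id_{\s{B}}$, and, symmetrically, the counit and unit of $\farg\FMcomp\Kerd\dashv\farg\FMcomp\Ker$ are given by $\id\FMcomp\mult{\Ker}$ and $\id\FMcomp\comult{\Ker}$. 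The cases $\s{A}=\sod[X]$, $\s{B}=\sod[Y]$ are, modulo the identifications of lemma \ref{assoc}, the definitions of $\mult{\Ker}$ and $\comult{\Ker}$, and the general case follows from these by naturality of units and counits once one unwinds how the adjunctions are built in the proof of lemma \ref{adjoint}.

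Granting this, I would argue as follows. For (1): by lemma \ref{assoc} the functor $(\Ker\FMcomp\Kerr)\FMcomp\farg$ equals $(\Ker\FMcomp\farg)\comp(\Kerr\FMcomp\farg)$; by lemma \ref{adjoint} it has the right adjoint $(\Ker\FMcomp\Kerr)\tdual\FMcomp\farg$, and being a composition of two functors each with a right adjoint (again lemma \ref{adjoint}) it also has the right adjoint $(\Kerrd\FMcomp\farg)\comp(\Kerd\FMcomp\farg)=(\Kerrd\FMcomp\Kerd)\FMcomp\farg$; by uniqueness of right adjoints these two functors are canonically isomorphic, compatibly with their counits, and evaluating this isomorphism at $\sod[X]$ (and using $\s{A}\FMcomp\sod[X]\iso\s{A}$) produces the isomorphism of (1), which I take as its definition. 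With this choice, (2) is immediate: by the description in the first paragraph and the standard formula $\epsilon_F\comp(F\epsilon_G F\radj)$ for the counit of a composite adjunction $FG\dashv G\radj F\radj$, the left-then-bottom path of the diagram is the counit at $\sod[X]$ of the composite adjunction $((\Ker\FMcomp\farg)\comp(\Kerr\FMcomp\farg))\dashv((\Kerrd\FMcomp\farg)\comp(\Kerd\FMcomp\farg))$, the right vertical arrow is the counit at $\sod[X]$ of the adjunction attached to $\Ker\FMcomp\Kerr$, and commutativity of the diagram is exactly the compatibility with counits of the isomorphism of (1).

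For (3): these are the two triangle identities for $\Ker\FMcomp\farg\dashv\Kerd\FMcomp\farg$; for the first composition (with $Z=Y$), $\id_{\Ker}\FMcomp\comult{\Ker}$ is the image under $\Ker\FMcomp\farg$ of the unit at $\sod[Y]$ and $\mult{\Ker}\FMcomp\id_{\Ker}$ is the counit at $\Ker=(\Ker\FMcomp\farg)(\sod[Y])$, so their composite is $\id_{\Ker}$, and the second composition is handled in the same way with $Z=X$ and the objects $\sod[X]$, $\Kerd$. Finally (4) follows from (2) and (3): substituting the isomorphism of (1) and rewriting $\mult{\Ker\FMcomp\Kerr}$ by means of (2), the clockwise composite of the diagram becomes, after applying naturality of the transformation $\comult{\Ker}\FMcomp\id_{(\farg)}$ to the morphism $\mult{\Kerr}\FMcomp\id_{\Kerd}$, the composite $(\id_{\Kerd}\FMcomp\mult{\Ker})\comp(\comult{\Ker}\FMcomp\id_{\Kerd})\comp(\mult{\Kerr}\FMcomp\id_{\Kerd})$, whose first two factors compose to $\id_{\Kerd}$ by the second triangle identity of (3).

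The genuine work, and the step I expect to be the main obstacle, is the description of the adjunctions of lemma \ref{adjoint} announced in the first paragraph, together with the accompanying bookkeeping of the associativity and identity isomorphisms of lemma \ref{assoc} inside the diagrams of (2) and (4): this is purely formal but requires decomposing $\Ker\FMcomp\farg\dashv\Kerd\FMcomp\farg$ into the elementary adjunctions $f^*\dashv f_*\dashv f^!$ and $\s{F}\otimes\farg\dashv\s{F}\dual\otimes\farg$ used in the proof of lemma \ref{adjoint}, and then invoking compatibility of their units and counits with flat base change and the projection formula. A more hands-on alternative would instead prove (1) by a direct application of Grothendieck--Serre duality to the projection $\pr_{1,3}\colon X\times Y\times Z\to X\times Z$ and obtain (2)--(4) by expanding every occurrence of $\FMcomp$ into its defining push--pull--tensor expression, at the cost of considerably longer explicit diagram chases.
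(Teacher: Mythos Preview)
Your approach is essentially the same as the paper's: the paper also proves (1) by evaluating the canonical isomorphism $(\fun\comp\funn)\radj\iso\funn\radj\comp\fun\radj$ at $\sod[X]$ for $\fun=\Ker\FMcomp\farg$ and $\funn=\Kerr\FMcomp\farg$, obtains (2) from the standard pasting formula for the counit of a composite adjunction (citing Mac Lane), reads (3) off from the triangle identities, and derives (4) from (2) and (3) by exactly the diagram chase you describe. The one difference is that you are more explicit than the paper about the step identifying the counit of $\Ker\FMcomp\farg\dashv\Kerd\FMcomp\farg$ at a general object $\s{A}$ with $\mult{\Ker}\FMcomp\id_{\s{A}}$; the paper leaves this implicit, but it is indeed what is being used when the abstract diagram $\fun\hcomp\mult{\funn}\hcomp\fun\radj$ is translated into $\id_{\Ker}\FMcomp\mult{\Kerr}\FMcomp\id_{\Kerd}$.
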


\begin{proof}
Denoting by $\fun\radj$ the right adjoint of a functor $\fun$, it is clear
that if $\fun$ and $\funn$ are two composable functors admitting a right
adjoint, then $(\fun\comp\funn)\radj\iso\funn\radj\comp\fun\radj$. Evaluating
this isomorphism at $\sod[X]$ when
\begin{gather*}
\fun=\Ker\FMcomp\farg\colon\D[b](Y\times X)\to\D[b](X\times X), \\
\funn=\Kerr\FMcomp\farg\colon\D[b](Z\times X)\to\D[b](Y\times X)
\end{gather*}
and using \ref{adjoint} we obtain (1). Similarly, denoting by
$\mult{\fun}\colon\fun\comp\fun\radj\to\id$ and
$\comult{\fun}\colon\id\to\fun\radj\comp\fun$ the adjunction morphisms,
(2) follows from the fact that the diagram
\[\xymatrix{
\fun\comp\funn\comp\funn\radj\comp\fun\radj \ar[rr]^-{\sim}
\ar[d]_{\fun\hcomp\mult{\funn}\hcomp\fun\radj} & &
(\fun\comp\funn)\comp(\fun\comp\funn)\radj
\ar[d]^{\mult{\fun\comp\funn}} \\
\fun\comp\fun\radj \ar[rr]_-{\mult{\fun}} & & \id
}\]
commutes (see \cite[Theorem 1, p. 101]{M}), whereas (3) follows from
the fact that the compositions $\fun\mor{\id\hcomp\comult{\fun}}
\fun\comp\fun\radj\comp\fun\mor{\mult{\fun}\hcomp\id}\fun$ and
$\fun\radj\mor{\comult{\fun}\hcomp\id}\fun\radj\comp\fun\comp\fun\radj
\mor{\id\hcomp\mult{\fun}}\fun\radj$ coincide with $\id_{\fun}$ and
$\id_{\fun\radj}$ (see \cite[Theorem 1, p. 80]{M}). As for (4), in the
diagram
\[\xymatrix{
\Kerr\FMcomp\Kerrd\FMcomp\Kerd \ar[rrrr]^-{\sim}
\ar[rd]^{\comult{\Ker}\FMcomp\id_{\Kerr\FMcomp\Kerrd\FMcomp\Kerd}}
\ar[dd]^{\mult{\Kerr}\FMcomp\id_{\Kerd}} & & & &
\Kerr\FMcomp(\Ker\FMcomp\Kerr)\tdual
\ar[dd]_{\comult{\Ker}\FMcomp\id_{\Kerr\FMcomp(\Ker\FMcomp\Kerr)\tdual}} \\
 & \Kerd\FMcomp\Ker\FMcomp\Kerr\FMcomp\Kerrd\FMcomp\Kerd
\ar[drrr]^{\sim}
\ar[d]_{\id_{\Kerd\FMcomp\Ker}\FMcomp\mult{\Kerr}\FMcomp\id_{\Kerd}} \\
\Kerd \ar[r]_-{\comult{\Ker}\FMcomp\id_{\Kerd}} &
\Kerd\FMcomp\Ker\FMcomp\Kerd \ar[r]_-{\id_{\Kerd}\FMcomp\mult{\Ker}}
& \Kerd & & \Kerd\FMcomp\Ker\FMcomp\Kerr\FMcomp(\Ker\FMcomp\Kerr)\tdual
\ar[ll]^-{\id_{\Kerd}\FMcomp\mult{\Ker\FMcomp\Kerr}}
}\]
the triangle commutes by (2). Since the other two inner quadrangles
clearly commute as well, we conclude that the outer square also
commutes, which yields the result, taking into account (3).
\end{proof}

For the rest of this section we fix $\Ker\in\D[b](X\times Y)$ and we
set
\begin{gather}
\fun:=\FM{\Ker}\colon\D[b](Y)\to\D[b](X),\nonumber \\
\funp:=\Ker\FMcomp\farg\FMcomp\Kerd\colon\D[b](Y\times Y)\to
\D[b](X\times X).\label{funp}
\end{gather}
Note that $\funp(\sod[Y])$ is naturally isomorphic to
$\Ker\FMcomp\Kerd$, and we will freely regard $\mult{\Ker}$ as a
morphism $\fundiag\colon\funp(\sod[Y])\to\sod[X]$.

\begin{coro}\label{Fprod}
For every $\s{G},\s{G}'\in\D[b](Y)$ there is a natural isomorphism
\[\funp(\s{G}'\boxtimes\s{G}\dual)\iso
\fun(\s{G}')\boxtimes\fun(\s{G})\dual.\]
\end{coro}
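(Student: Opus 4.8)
The plan is to unwind the definition of $\funp$ and reduce everything to the associativity and identity properties of $\FMcomp$ established in Lemma \ref{assoc}, together with Corollary \ref{splitker}. First I would observe that, by definition \eqref{funp}, we have $\funp(\s{G}'\boxtimes\s{G}\dual)=\Ker\FMcomp(\s{G}'\boxtimes\s{G}\dual)\FMcomp\Kerd$, and that $\s{G}'\boxtimes\s{G}\dual$, regarded as an object of $\D[b](Y\times Y)$, can be written as $\s{G}'\FMcomp\s{G}\dual$ where $\s{G}'\in\D[b](Y\times\pt)$ and $\s{G}\dual\in\D[b](\pt\times Y)$ (using that $\s{G}\dual$ coincides with $\s{G}\tdual$ for an object of $\D[b](Y)$, as noted before Lemma \ref{adjoint}). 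Then by the associativity of $\FMcomp$ (treated as an equality from now on, as the paper does after Corollary \ref{splitker}),
\[\funp(\s{G}'\boxtimes\s{G}\dual)=\Ker\FMcomp\s{G}'\FMcomp\s{G}\dual\FMcomp\Kerd=(\Ker\FMcomp\s{G}')\FMcomp(\s{G}\dual\FMcomp\Kerd).\]

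Next I would identify the two factors. The object $\Ker\FMcomp\s{G}'$, where $\s{G}'$ is viewed in $\D[b](Y\times\pt)$, is by definition $\FM{\Ker}(\s{G}')=\fun(\s{G}')\in\D[b](X\times\pt)=\D[b](X)$. For the second factor, $\s{G}\dual\FMcomp\Kerd$ with $\s{G}\dual\in\D[b](\pt\times Y)$ and $\Kerd\in\D[b](Y\times X)$ is, by the description of $\farg\FMcomp\Ker$ specialized to the relevant factor, $\FMa{\Kerd}(\s{G}\dual)\in\D[b](\pt\times X)=\D[b](X)$. Now I need to recognize $\FMa{\Kerd}(\s{G}\dual)$ as $\fun(\s{G})\dual$. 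By Remark \ref{FMadj}, $\FMa{\Kerd}\colon\D[b](Y)\to\D[b](X)$ is right adjoint to $\FMa{\Ker}$; alternatively, and more directly, one has the general identity $\FMa{\Kerd}(\s{H})\iso(\FM{\Ker}(\s{H}\dual))\dual$ coming from Grothendieck–Serre duality for the projections, i.e. $\FMa{\Kerd}$ is the "dual" functor to $\fun$. Applying this with $\s{H}=\s{G}\dual$ and using $\s{G}\dual{}\dual\iso\s{G}$ gives $\s{G}\dual\FMcomp\Kerd\iso(\fun(\s{G}))\dual$. Splicing the two identifications into the displayed equation yields the claimed natural isomorphism $\funp(\s{G}'\boxtimes\s{G}\dual)\iso\fun(\s{G}')\boxtimes\fun(\s{G})\dual$, since for objects of $\D[b](X)$ the operation $\FMcomp$ is just $\boxtimes$.

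The naturality in $\s{G}$ and $\s{G}'$ is automatic: every isomorphism invoked — associativity of $\FMcomp$ from Lemma \ref{assoc}, the identification of $\Ker\FMcomp\farg$ and $\farg\FMcomp\Kerd$ with Fourier–Mukai functors, and the duality identity $\FMa{\Kerd}(\farg)\iso(\fun((\farg)\dual))\dual$ — is natural in its argument, so the composite is functorial in both variables.

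The main obstacle I anticipate is not the bookkeeping with associativity (that is routine given Lemma \ref{assoc}) but pinning down the identification of $\s{G}\dual\FMcomp\Kerd$ with $\fun(\s{G})\dual$ with the correct dualizing twists. Recall $\Kerd=\Ker\dual\otimes\pr_2^*\dss_Y$ as an object of $\D[b](Y\times X)$, so one must check that the twist by $\dss_Y$ is exactly what is absorbed when one dualizes $\fun(\s{G})={\pr_1}_*(\Ker\otimes\pr_2^*\s{G})$ and rewrites it via relative Serre duality along the projection $\pr_1\colon X\times Y\to X$; concretely, $(\fun(\s{G}))\dual\iso{\pr_1}_*(\Ker\dual\otimes\pr_2^*\s{G}\dual\otimes\pr_2^*\dss_Y)$ is the identity that has to be verified (or quoted), and this is precisely $\FMa{\Kerd}(\s{G}\dual)$. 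Once this single compatibility is checked — or deduced formally from Remark \ref{FMadj} by noting that right adjoints of Fourier–Mukai functors are again Fourier–Mukai functors with the stated kernel and that dualizing an object is the same as the right adjoint evaluation in this $\pt$-based setting — the rest of the proof is a short chain of equalities.
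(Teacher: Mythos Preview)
Your argument is correct and follows the same skeleton as the paper's proof: unwind $\funp$, use associativity of $\FMcomp$ to factor as $(\Ker\FMcomp\s{G}')\FMcomp(\s{G}\dual\FMcomp\Kerd)$, and then identify the two pieces. The only difference is in how you handle the second factor. The paper simply invokes part (1) of Proposition~\ref{compat}, which gives $(\Ker\FMcomp\Kerr)\tdual\iso\Kerrd\FMcomp\Kerd$ naturally; specializing to $\Kerr=\s{G}\in\D[b](Y\times\pt)$ and recalling that $(\farg)\tdual=(\farg)\dual$ for objects of $\D[b](X)$ yields $\s{G}\dual\FMcomp\Kerd=\s{G}\tdual\FMcomp\Kerd\iso(\Ker\FMcomp\s{G})\tdual=\fun(\s{G})\dual$ in one stroke, with naturality automatic. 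Your ``main obstacle'' --- checking the Grothendieck--Serre duality compatibility with the twist by $\dss_Y$ --- is thus already absorbed into Proposition~\ref{compat}(1), whose proof goes via composition of right adjoints rather than an explicit duality computation; this is exactly the adjoint-based alternative you mention at the end.
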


\begin{proof}
By part (1) of \ref{compat} we have
\[\funp(\s{G}'\boxtimes\s{G}\dual)\iso
(\Ker\FMcomp\s{G}')\FMcomp(\s{G}\tdual\FMcomp\Kerd)\iso
(\Ker\FMcomp\s{G}')\FMcomp(\Ker\FMcomp\s{G})\tdual\iso
\fun(\s{G}')\boxtimes\fun(\s{G})\dual.\]
\end{proof}

\begin{coro}\label{natmult}
For every $\s{G}\in\D[b](Y)$ the diagram
\[\xymatrix{
\funp(\s{G}\boxtimes\s{G}\dual) \ar[rr]^-{\sim}
\ar[d]_{\funp(\mult{\s{G}})} & &
\fun(\s{G})\boxtimes\fun(\s{G})\dual
\ar[d]^{\mult{\fun(\s{G})}} \\
\funp(\sod[Y]) \ar[rr]_-{\fundiag} & & \sod[X]
}\]
commutes, where the top horizontal map is the natural isomorphism
given by \ref{Fprod}.
\end{coro}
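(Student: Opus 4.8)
The plan is to deduce the statement directly from part (2) of Proposition \ref{compat}, specialized to $Z=\pt$ and $\Kerr=\s{G}$, the latter regarded as an object of $\D[b](Y\times\pt)$ via the usual identification $Y\iso Y\times\pt$. First I would unwind what the four corners of the square of \ref{compat}(2) become under these choices. Since $\Kerrd$ is then $\s{G}\dual\in\D[b](\pt\times Y)$ and $\s{G}\FMcomp\s{G}\dual=\s{G}\boxtimes\s{G}\dual$, treating the associativity isomorphisms of \ref{assoc} as equalities the top-left corner $\Ker\FMcomp\s{G}\FMcomp\s{G}\dual\FMcomp\Kerd$ is $\funp(\s{G}\boxtimes\s{G}\dual)$ by \eqref{funp}; since $\Ker\FMcomp\s{G}=\FM{\Ker}(\s{G})=\fun(\s{G})$, the top-right corner $(\Ker\FMcomp\s{G})\FMcomp(\Ker\FMcomp\s{G})\tdual$ is $\fun(\s{G})\boxtimes\fun(\s{G})\dual$; and the bottom-left corner $\Ker\FMcomp\Kerd$ is $\funp(\sod[Y])$ through the isomorphism recorded just before Corollary \ref{Fprod}, under which $\mult{\Ker}$ is $\fundiag$.

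Next I would check that, under these identifications, the three structure maps of the square of \ref{compat}(2) are exactly those appearing in the statement: the bottom map $\mult{\Ker}$ is $\fundiag$; the right map $\mult{\Ker\FMcomp\Kerr}$ is $\mult{\fun(\s{G})}$; and the left map $\id_{\Ker}\FMcomp\mult{\s{G}}\FMcomp\id_{\Kerd}$ is $\funp(\mult{\s{G}})$ by \eqref{funp}, once one recalls that for $\s{G}\in\D[b](Y)$ the morphism $\mult{\s{G}}$ is, under the identification $\s{G}\boxtimes\s{G}\dual\iso\s{G}\FMcomp\s{G}\tdual$, the natural morphism $\s{G}\boxtimes\s{G}\dual\to\sod[Y]$. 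Finally, the top isomorphism of the square of \ref{compat}(2) is the one induced by the isomorphism $\s{G}\dual\FMcomp\Kerd\iso(\Ker\FMcomp\s{G})\tdual$ of \ref{compat}(1); but this is precisely the composite used to build the isomorphism of Corollary \ref{Fprod} in the case $\s{G}'=\s{G}$. Hence the specialized square of \ref{compat}(2) is, corner by corner and arrow by arrow, the square in the statement, and the latter commutes because the former does.

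The only point that deserves genuine attention is this last identification of the two horizontal isomorphisms: one must be sure that routing through Corollary \ref{Fprod} does not secretly insert an extra twist compared with the bare instance of \ref{compat}(1) used inside \ref{compat}(2). Inspecting the (very short) proof of \ref{Fprod}, it consists of exactly that instance of \ref{compat}(1), pre- and post-composed only with associativity isomorphisms that we have agreed to treat as equalities, so the two isomorphisms literally coincide. Beyond this, the argument is pure bookkeeping of the canonical identifications set up in this section, so I do not expect any real obstacle.
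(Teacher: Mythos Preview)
Your proposal is correct and is exactly the paper's approach: the paper's proof is the single sentence ``It follows immediately from part (2) of \ref{compat},'' and what you have written is precisely the specialization $Z=\pt$, $\Kerr=\s{G}$ spelled out in detail. Your extra care in matching the top isomorphism with the one from Corollary~\ref{Fprod} is warranted and your resolution of it is correct.
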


\begin{proof}
It follows immediately from part (2) of \ref{compat}.
\end{proof}

Given $\s{A}\in\D[b](Y\times Y)$ and $\s{G}\in\D[b](Y)$, we define the
natural morphism
\begin{equation}\label{comultind}
\comultind{\s{A}}{\s{G}}\colon\fun(\FM{\s{A}}(\s{G}))\iso
\Ker\FMcomp\s{A}\FMcomp\s{G}
\mor{\id_{\Ker\FMcomp\s{A}}\FMcomp\comult{\Ker}\FMcomp\id_{\s{G}}}
\Ker\FMcomp\s{A}\FMcomp\Kerd\FMcomp\Ker\FMcomp\s{G}
\iso\FM{\funp(\s{A})}(\fun(\s{G})),
\end{equation}
which is functorial in $\s{A}$, meaning that for every morphism
$\alpha\colon\s{A}\to\s{A}'$ in $\D[b](Y\times Y)$ there is a
commutative diagram in $\D[b](X)$
\begin{equation}\label{natcomultind}
\xymatrix{\fun(\FM{\s{A}}(\s{G})) \ar[rr]^-{\comultind{\s{A}}{\s{G}}}
\ar[d]_{\fun(\FM{\alpha}(\s{G}))} & & \FM{\funp(\s{A})}(\fun(\s{G}))
\ar[d]^{\FM{\funp(\alpha)}(\fun(\s{G}))} \\
\fun(\FM{\s{A}'}(\s{G})) \ar[rr]_-{\comultind{\s{A}'}{\s{G}}} & &
\FM{\funp(\s{A}')}(\fun(\s{G})).}
\end{equation}

\begin{coro}\label{split}
For every $\s{G}\in\D[b](Y)$ the composition
\[\fun(\s{G})\iso\fun(\FM{\sod[Y]}(\s{G}))\mor{\comultind{\sod[Y]}{\s{G}}}
\FM{\funp(\sod[Y])}(\fun(\s{G}))\mor{\FM{\fundiag}(\fun(\s{G}))}
\FM{\sod[X]}(\fun(\s{G}))\iso\fun(\s{G})\]
coincides with $\id_{\fun(\s{G})}$.
\end{coro}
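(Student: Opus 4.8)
The plan is to unwind the natural identifications of Lemma \ref{assoc} and then recognise the composition in the statement as the image of a triangle identity under an exact functor.

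Concretely, using the canonical isomorphisms $\FM{\sod[Y]}(\s{G})\iso\sod[Y]\FMcomp\s{G}\iso\s{G}$, $\Ker\FMcomp\sod[Y]\iso\Ker$, $\funp(\sod[Y])\iso\Ker\FMcomp\sod[Y]\FMcomp\Kerd\iso\Ker\FMcomp\Kerd$ and $\FM{\sod[X]}(\fun(\s{G}))\iso\sod[X]\FMcomp\fun(\s{G})\iso\fun(\s{G})$, and recalling that $\fun(\s{G})=\Ker\FMcomp\s{G}$ and $\fundiag=\mult{\Ker}$, I would first check that \eqref{comultind} (for $\s{A}=\sod[Y]$) together with the definition of $\fundiag$ turns the composition in the statement into
\[\Ker\FMcomp\s{G}\mor{(\id_{\Ker}\FMcomp\comult{\Ker})\FMcomp\id_{\s{G}}}
\Ker\FMcomp\Kerd\FMcomp\Ker\FMcomp\s{G}
\mor{(\mult{\Ker}\FMcomp\id_{\Ker})\FMcomp\id_{\s{G}}}\Ker\FMcomp\s{G}.\]
But this is precisely the image, under the exact functor $\farg\FMcomp\s{G}\colon\D[b](X\times Y)\to\D[b](X)$ (with $\s{G}$ viewed in $\D[b](Y\times\pt)$), of the composition
\[\Ker\mor{\id_{\Ker}\FMcomp\comult{\Ker}}
\Ker\FMcomp\Kerd\FMcomp\Ker\mor{\mult{\Ker}\FMcomp\id_{\Ker}}\Ker,\]
which equals $\id_{\Ker}$ by part (3) of \ref{compat}. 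Since an exact functor preserves identities, the composition in the statement equals $\id_{\Ker\FMcomp\s{G}}=\id_{\fun(\s{G})}$.

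The step requiring the most care is the very first reduction: after the identifications of \ref{assoc} one must check that $\comultind{\sod[Y]}{\s{G}}$ really goes over to $(\id_{\Ker}\FMcomp\comult{\Ker})\FMcomp\id_{\s{G}}$ and $\FM{\fundiag}(\fun(\s{G}))$ to $(\mult{\Ker}\FMcomp\id_{\Ker})\FMcomp\id_{\s{G}}$. This is a coherence issue: it rests on the compatibility of the isomorphism $\Ker\FMcomp\sod[Y]\iso\Ker$ with $\FMcomp$ regarded as a bifunctor up to natural isomorphism, so that it carries $\id_{\Ker\FMcomp\sod[Y]}\FMcomp\comult{\Ker}$ to $\id_{\Ker}\FMcomp\comult{\Ker}$ (and likewise on the other side). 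Granting the convention of treating the isomorphisms of \ref{assoc} as equalities, this verification is essentially formal, and once it is in place the rest of the argument is immediate.
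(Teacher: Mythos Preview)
Your proof is correct and follows essentially the same approach as the paper: the paper's proof simply states that the composition can be identified with the image under the functor $\farg\FMcomp\s{G}$ of the first sequence in part (3) of \ref{compat}, which is exactly your argument. You have merely spelled out the coherence verification that the paper leaves implicit by its convention of treating the isomorphisms of \ref{assoc} as equalities.
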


\begin{proof}
It is enough to observe that the above sequence can be identified with
the image through the functor $\farg\FMcomp\s{G}$ of the first
sequence in part (3) of \ref{compat}.
\end{proof}

\begin{coro}\label{multcomult}
For every $\s{A}\in\D[b](Y\times Y)$ and for every $\s{G}\in\D[b](Y)$
the diagram
\[\xymatrix{\funp(\FM{\s{A}}(\s{G})\boxtimes\s{G}\dual)
\ar[rr]^-{\sim} \ar[d]_{\funp(\multind{\s{A}}{\s{G}})} & & 
\fun(\FM{\s{A}}(\s{G}))\boxtimes\fun(\s{G})\dual 
\ar[d]^{\comultind{\s{A}}{\s{G}}\boxtimes\id} \\
\funp(\s{A}) & & 
\FM{\funp(\s{A})}(\fun(\s{G}))\boxtimes\fun(\s{G})\dual
\ar[ll]^-{\multind{\funp(\s{A})}{\fun(\s{G})}}}\] 
commutes, where the top horizontal map is the natural isomorphism
given by \ref{Fprod}.
\end{coro}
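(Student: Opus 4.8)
The plan is to translate everything into the $\FMcomp$-calculus of kernels and to identify the top-right-bottom path of the square as an instance of the triangle identity in part (3) of Proposition \ref{compat}: I will show that both ways around the square reduce, after the identifications furnished by Lemma \ref{assoc}, to the single morphism $\id_{\Ker\FMcomp\s{A}}\FMcomp\mult{\s{G}}\FMcomp\id_{\Kerd}$.

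First I would unwind the corners and the left vertical arrow. Since $\FM{\s{A}}(\s{G})\boxtimes\s{G}\dual=\s{A}\FMcomp\s{G}\FMcomp\s{G}\tdual$, applying $\funp=\Ker\FMcomp\farg\FMcomp\Kerd$ gives $\funp(\FM{\s{A}}(\s{G})\boxtimes\s{G}\dual)=\Ker\FMcomp\s{A}\FMcomp\s{G}\FMcomp\s{G}\tdual\FMcomp\Kerd$; by part (1) of \ref{compat} and associativity this is also the object $\fun(\FM{\s{A}}(\s{G}))\boxtimes\fun(\s{G})\dual=(\Ker\FMcomp\s{A}\FMcomp\s{G})\FMcomp(\Ker\FMcomp\s{G})\tdual$, and tracing through the proof of \ref{Fprod} the top isomorphism of the square becomes, under this identification, the identity. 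Likewise $\FM{\funp(\s{A})}(\fun(\s{G}))\boxtimes\fun(\s{G})\dual=\Ker\FMcomp\s{A}\FMcomp\Kerd\FMcomp\Ker\FMcomp\s{G}\FMcomp\s{G}\tdual\FMcomp\Kerd$. Using $\multind{\s{A}}{\s{G}}=\id_{\s{A}}\FMcomp\mult{\s{G}}$ from \eqref{multind}, the left vertical arrow $\funp(\multind{\s{A}}{\s{G}})$ is $\id_{\Ker\FMcomp\s{A}}\FMcomp\mult{\s{G}}\FMcomp\id_{\Kerd}$, landing in $\funp(\s{A})=\Ker\FMcomp\s{A}\FMcomp\Kerd$.

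Next I would trace the other path. By \eqref{comultind}, $\comultind{\s{A}}{\s{G}}\boxtimes\id$ equals $\id_{\Ker\FMcomp\s{A}}\FMcomp\comult{\Ker}\FMcomp\id_{\s{G}\FMcomp\s{G}\tdual\FMcomp\Kerd}$, i.e. the insertion of $\comult{\Ker}\colon\sod[Y]\to\Kerd\FMcomp\Ker$ in the slot between $\s{A}$ and $\s{G}$; and by \eqref{multind} the bottom arrow is $\id_{\funp(\s{A})}\FMcomp\mult{\fun(\s{G})}$, where, by \ref{natmult}, $\mult{\fun(\s{G})}=\mult{\Ker}\comp(\id_{\Ker}\FMcomp\mult{\s{G}}\FMcomp\id_{\Kerd})$. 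Thus the right-bottom path, read on $\Ker\FMcomp\s{A}\FMcomp\s{G}\FMcomp\s{G}\tdual\FMcomp\Kerd$, is: insert $\comult{\Ker}$ between $\s{A}$ and $\s{G}$; contract $\s{G}\FMcomp\s{G}\tdual$ by $\mult{\s{G}}$; contract the resulting tail $\Ker\FMcomp\Kerd$ by $\mult{\Ker}$. The first two operations act on disjoint tensor slots, so by functoriality of $\FMcomp$ in each argument I may carry out the $\mult{\s{G}}$-contraction first; what then remains, in the slot following $\Ker\FMcomp\s{A}$, is the composite $\Kerd\mor{\comult{\Ker}\FMcomp\id_{\Kerd}}\Kerd\FMcomp\Ker\FMcomp\Kerd\mor{\id_{\Kerd}\FMcomp\mult{\Ker}}\Kerd$, which is $\id_{\Kerd}$ by part (3) of \ref{compat}. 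Hence the right-bottom path is $\id_{\Ker\FMcomp\s{A}}\FMcomp\mult{\s{G}}\FMcomp\id_{\Kerd}$ as well, and the square commutes.

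The only delicate point, and the main obstacle such as it is, is the bookkeeping: one must check that the isomorphism of \ref{Fprod}, the definitions \eqref{multind} and \eqref{comultind}, and the factorization of $\mult{\fun(\s{G})}$ from \ref{natmult} all cohere with the associativity isomorphisms of \ref{assoc} (and with the single instance of part (1) of \ref{compat} used to identify $(\Ker\FMcomp\s{G})\tdual$ with $\s{G}\tdual\FMcomp\Kerd$), so that the two edges of the square really do become the two displayed $\FMcomp$-expressions. Treating the associativity isomorphisms as identities (as agreed after Corollary \ref{splitker}) and keeping the naturality squares \eqref{natmultind} and \eqref{natcomultind} in mind, this is routine and requires no idea beyond Proposition \ref{compat}.
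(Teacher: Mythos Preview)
Your argument is correct. The paper's own proof is a one-liner: it observes that the square is precisely the image of the commutative diagram in part~(4) of Proposition~\ref{compat} (with $\s{G}$ in place of $\Kerr$) under the functor $(\Ker\FMcomp\s{A})\FMcomp\farg$. What you have done is effectively to inline the proof of part~(4) itself---which was proved in~\ref{compat} exactly by combining parts~(2) and~(3) via the interchange law---rather than to invoke~(4) directly. So the two arguments are the same in substance; the paper just packages the key step more efficiently by having isolated it beforehand as~(4).
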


\begin{proof}
It is immediate to see that the above diagram can be identified with
the image through the functor $(\Ker\FMcomp\s{A})\FMcomp\farg$ of the
commutative diagram in part (4) of \ref{compat}, with $\s{G}$ in place
of $\Kerr$.
\end{proof}

\section{Resolution of the diagonal via a full exceptional
sequence}\label{resol}

Let $Y$ be a stack and assume that $(\exc{0},\dots,\exc{m})$ is a full
exceptional sequence in $\D[b](Y)$; we will denote by
$(\excd{m},\dots,\excd{0})$ its dual (full exceptional) sequence (see
the appendix for its definition).

For $0\le k\le m$ we define $\clr_k:=
\trspan{\exc{0}\dual,\dots,\exc{k}\dual}$ and $\crr_k:=
\trspan{\exc{k+1}\dual,\dots,\exc{m}\dual}$ (they are admissible
subcategories of $\D[b](Y)$, as clearly
$(\exc{m}\dual,\dots,\exc{0}\dual)$ is a full exceptional sequence in
$\D[b](Y)$). If $\cat{C}$ is a subcategory of $\D[b](Y)$, we set
\[\D[b](Y)\boxtimes\cat{C}:=
\trspan{\{\s{F}\boxtimes\s{G}\st\s{F}\in\D[b](Y),\s{G}\in\cat{C}\}}.\]
It follows from \cite[prop. 2.1.18]{B} that
$(\D[b](Y)\boxtimes\crr_k,\D[b](Y)\boxtimes\clr_k)$ is
a semiorthogonal decomposition of $\D[b](Y\times Y)$. Hence by
\ref{semiort} there exists (unique up to isomorphism) a distinguished
triangle in $\D[b](Y\times Y)$
\[\lr{k}\mor{\lrd{k}}\sod\mor{\drr{k}}\rr{k}\mor{\rrlr{k}}\lr{k}[1]\]
with $\lr{k}\in\D[b](Y)\boxtimes\clr_k$ and $\rr{k}\in
\D[b](Y)\boxtimes\crr_k$ (in particular, $\rr{m}=0$ and
$\lrd{m}$ is an isomorphism). For $0<k\le m$, since
$\Hom_{Y\times Y}(\lr{k-1},\rr{k})=0$, there exists a unique morphism
$\lrlr{k}\colon\lr{k-1}\to\lr{k}$ such that $\lrd{k-1}=\lrd{k}\comp\lrlr{k}$.

\begin{prop}\label{diagres}
$\lr{0}\iso\excd{0}\boxtimes\exc{0}\dual=\exc{0}\boxtimes\exc{0}\dual$
and $\cone{\lrlr{k}}\iso \excd{k}\boxtimes\exc{k}\dual$ for $0<k\le m$.
\end{prop}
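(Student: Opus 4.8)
The plan is to use the semiorthogonal decompositions defining $\lr{k}$, together with the characterization of the dual exceptional sequence, to identify the relevant cones directly. First I would treat the case $k=0$. By definition $\lr{0}$ sits in a distinguished triangle $\lr{0}\to\sod\to\rr{0}\to\lr{0}[1]$ with $\lr{0}\in\D[b](Y)\boxtimes\clr_0=\D[b](Y)\boxtimes\trspan{\exc{0}\dual}$ and $\rr{0}\in\D[b](Y)\boxtimes\crr_0=\D[b](Y)\boxtimes\trspan{\exc{1}\dual,\dots,\exc{m}\dual}$. Applying the functor $\FMa{\sod}(\farg)=\id$ is not quite what I want; instead I would apply $\FMa{\farg}$ with the exceptional objects. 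The cleanest route: for $\s{G}\in\D[b](Y)$, the functor $\FMa{\sod}$ evaluated at $\s{G}$ gives $\s{G}$ back, and more generally $\FMa{\lr{0}}(\exc{j})$ can be computed. Since $\lr{0}\in\D[b](Y)\boxtimes\trspan{\exc{0}\dual}$, write $\lr{0}\iso\s{F}\boxtimes\exc{0}\dual$ up to the triangulated span; by \ref{splitker}, $\FMa{\s{F}\boxtimes\exc{0}\dual}(\exc{j})\iso\exc{0}\dual\otimes_{\K}\Hom_Y(\s{F}\dual,\exc{j})$. Comparing with $\FMa{\sod}(\exc{j})=\exc{j}$ and $\FMa{\rr{0}}(\exc{j})$ (which vanishes for $j=0$ because $\rr{0}$ lies in the span of the $\exc{i}\dual$ with $i\geq 1$ and $\Hom_Y(\exc{0},\exc{i}\dual[k])=0$ there), one gets that $\FMa{\lr{0}}(\exc{0})\iso\exc{0}$, forcing $\s{F}\iso\excd{0}$; and since $(\exc{0},\dots,\exc{m})$ being exceptional gives $\excd{0}\iso\exc{0}$ when $\exc{0}$ is the first object, we obtain $\lr{0}\iso\excd{0}\boxtimes\exc{0}\dual=\exc{0}\boxtimes\exc{0}\dual$.

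Next, for $0<k\le m$, I would analyze $\cone{\lrlr{k}}$. From the defining relation $\lrd{k-1}=\lrd{k}\comp\lrlr{k}$ and the octahedral axiom applied to the composition $\lr{k-1}\mor{\lrlr{k}}\lr{k}\mor{\lrd{k}}\sod$, there is a distinguished triangle $\cone{\lrlr{k}}\to\cone{\lrd{k-1}}\to\cone{\lrd{k}}\to\cone{\lrlr{k}}[1]$, i.e. $\cone{\lrlr{k}}\to\rr{k-1}\to\rr{k}\to\cone{\lrlr{k}}[1]$. So $\cone{\lrlr{k}}$ fits in a triangle with $\rr{k-1}\in\D[b](Y)\boxtimes\crr_{k-1}$ and $\rr{k}\in\D[b](Y)\boxtimes\crr_k$; since $\crr_k\subset\crr_{k-1}$, it follows that $\cone{\lrlr{k}}\in\D[b](Y)\boxtimes\crr_{k-1}$. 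On the other hand, $\lr{k-1}\in\D[b](Y)\boxtimes\clr_{k-1}$ and $\lr{k}\in\D[b](Y)\boxtimes\clr_k$, and $\clr_{k-1}\subset\clr_k$, so $\cone{\lrlr{k}}\in\D[b](Y)\boxtimes\clr_k$ as well. The intersection $(\D[b](Y)\boxtimes\clr_k)\cap(\D[b](Y)\boxtimes\crr_{k-1})$: in the semiorthogonal decomposition of $\D[b](Y)$ given by the exceptional sequence $(\exc{m}\dual,\dots,\exc{0}\dual)$, the subcategory $\clr_k\cap\crr_{k-1}$ is the triangulated span of the single object $\exc{k}\dual$, equivalent to $\D[b](\pt)$. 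Hence $\D[b](Y)\boxtimes(\clr_k\cap\crr_{k-1})$ is equivalent (via $\farg\boxtimes\exc{k}\dual$) to $\D[b](Y)$, and one checks $(\D[b](Y)\boxtimes\clr_k)\cap(\D[b](Y)\boxtimes\crr_{k-1})=\D[b](Y)\boxtimes(\clr_k\cap\crr_{k-1})$, so $\cone{\lrlr{k}}\iso\s{H}\boxtimes\exc{k}\dual$ for a unique $\s{H}\in\D[b](Y)$.

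It remains to identify $\s{H}\iso\excd{k}$. For this I would apply $\FMa{\farg}(\exc{j})$ for various $j$. Using \ref{splitker}, $\FMa{\s{H}\boxtimes\exc{k}\dual}(\exc{j})\iso\exc{k}\dual\otimes_{\K}\Hom_Y(\s{H}\dual,\exc{j})$. From the triangle $\lr{k-1}\to\lr{k}\to\cone{\lrlr{k}}$ together with the triangles defining $\lr{k-1}$ and $\lr{k}$ from $\sod$, applying $\FMa{\farg}(\exc{j})$ and using $\FMa{\sod}(\exc{j})=\exc{j}$ and $\FMa{\rr{k}}(\exc{j})=0$ for $j\le k$ (because $\rr{k}\in\D[b](Y)\boxtimes\trspan{\exc{k+1}\dual,\dots,\exc{m}\dual}$ and $\Hom_Y(\exc{j},\exc{i}\dual)=0$ for $i>j$), one finds $\FMa{\lr{k}}(\exc{j})\iso\exc{j}$ for $j\le k$ and $\FMa{\lr{k-1}}(\exc{j})\iso\exc{j}$ for $j\le k-1$, whence $\FMa{\cone{\lrlr{k}}}(\exc{j})\iso 0$ for $j<k$ and $\FMa{\cone{\lrlr{k}}}(\exc{k})\iso\exc{k}$. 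The vanishing for $j<k$ forces $\Hom_Y(\s{H}\dual,\exc{j})=0$ for $j<k$, and the isomorphism for $j=k$ gives $\Hom_Y(\s{H}\dual,\exc{k})\iso\K$ in degree $0$ with $\s{H}\dual\otimes_{\K}\K\iso\exc{k}\dual\otimes\s{H}$... more precisely it pins down $\s{H}$ up to the data characterizing $\excd{k}$ among objects of $\D[b](Y)$; combined with the defining property $\Hom_Y(\exc{i},\excd{j}[l])\iso\K^{\delta_{i,j}\delta_{l,0}}$ and fullness, one concludes $\s{H}\iso\excd{k}$. The main obstacle I anticipate is the bookkeeping needed to check that the morphism $\FMa{\lrlr{k}}(\exc{j})$ really is the identity (not just an abstract isomorphism) on $\exc{j}$ for $j<k$ and to correctly extract the vanishing/one-dimensionality conditions that characterize $\excd{k}$ — i.e. verifying that the Hom-computations are compatible with the structural morphisms rather than merely with the underlying objects.
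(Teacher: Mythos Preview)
Your overall architecture matches the paper's: show $\cone{\lrlr{k}}\in(\D[b](Y)\boxtimes\clr_k)\cap(\D[b](Y)\boxtimes\crr_{k-1})$ via the octahedral axiom, deduce it has the form $\s{H}\boxtimes\exc{k}\dual$, then pin down $\s{H}$ by applying Fourier--Mukai functors to test objects. The problem is in the last step: your choice of test objects is wrong, and the vanishing you invoke is false.

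You assert $\FMa{\rr{k}}(\exc{j})=0$ for $j\le k$, justified by ``$\Hom_Y(\exc{j},\exc{i}\dual)=0$ for $i>j$''. This fails already for $Y=\Ps^2$ with $(\exc{0},\exc{1},\exc{2})=(\so,\so(1),\so(2))$: taking $j=1$, $i=2$ gives $\Hom_{\Ps^2}(\so(1),\so(-2)[2])\iso H^2(\Ps^2,\so(-3))\iso\K$. Equivalently, the triangle $\FMa{\lr{k}}(\exc{j})\to\exc{j}\to\FMa{\rr{k}}(\exc{j})$ is the semiorthogonal decomposition of $\exc{j}$ with respect to $(\crr_k,\clr_k)$, so your claim $\FMa{\lr{k}}(\exc{j})\iso\exc{j}$ is the assertion $\exc{j}\in\clr_k=\trspan{\exc{0}\dual,\dots,\exc{k}\dual}$, which is not true in general (in the same example, $\so(1)\notin\trspan{\so,\so(-1)}$). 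The downstream claims $\FMa{\cone{\lrlr{k}}}(\exc{j})=0$ for $j<k$ and $\FMa{\cone{\lrlr{k}}}(\exc{k})\iso\exc{k}$ therefore have no basis; note also that the latter would force $\exc{k}\dual\iso\exc{k}$ up to shift, another unjustified constraint.

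The fix is exactly what the paper does: feed in $\exc{j}\dual$ rather than $\exc{j}$. Then $\exc{j}\dual\in\clr_k$ for $j\le k$ and $\exc{j}\dual\in\crr_k$ for $j>k$ \emph{by definition} of $\clr_k,\crr_k$, so the semiorthogonal decomposition of $\exc{j}\dual$ is trivial and one obtains $\FMa{\cone{\lrlr{k}}}(\exc{i}\dual)\iso(\exc{k}\dual)^{\delta_{i,k}}$. Via \ref{splitker} this reads $\Hom_Y(\exc{i},\s{H})\iso\K^{\delta_{i,k}}$, which is precisely the defining property of $\excd{k}$ in \ref{ort}; the paper then concludes $\s{H}\iso\excd{k}$ by two successive truncations along the dual sequence. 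Your route, even if the vanishing were repaired, would extract $\Hom_Y(\s{H}\dual,\exc{j})$, which is not the quantity characterizing $\excd{k}$ and would require a further (unstated) duality argument.
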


\begin{proof}
Setting $\excp{0}:=\lr{0}$ and $\excp{k}:=\cone{\lrlr{k}}$ for $0<k\le
m$, first we claim that $\excp{k}\iso\s{F}_k\boxtimes\exc{k}\dual$ for
some $\s{F}_k\in\D[b](Y)$. Indeed, for $k=0$ this is clear, and for
$0<k\le m$ it follows from the fact that $\excp{k}$ belongs both to
$\D[b](Y)\boxtimes\clr_k$ (because $\lrlr{k}$ is a morphism of
$\D[b](Y)\boxtimes\clr_k$) and to $\D[b](Y)\boxtimes\crr_{k-1}$
(because by (TR4) there is a distinguished triangle
\[\cone{\lrlr{k}}=\excp{k}\to\cone{\lrd{k-1}}\iso\rr{k-1}\to
\cone{\lrd{k}}\iso\rr{k}\to\cone{\lrlr{k}}[1]\]
and $\rr{k-1},\rr{k}\in\D[b](Y)\boxtimes\crr_{k-1}$).
Therefore by \ref{splitker} we have
\begin{equation}\label{FME}
\FMa{\excp{k}}(\exc{i}\dual)\iso
\exc{k}\dual\otimes_{\K}\Hom_Y(\exc{i},\s{F}_k)
\end{equation}
for $0\le i,k\le m$. On the other hand, from the distinguished
triangle
\[\FMa{\lr{k}}(\exc{i}\dual)\mor{\FMa{\lrd{k}}(\exc{i}\dual)}
\FMa{\sod}(\exc{i}\dual)\iso\exc{i}\dual\to
\FMa{\rr{k}}(\exc{i}\dual)\to\FMa{\lr{k}}(\exc{i}\dual)[1],\]
since clearly $\FMa{\lr{k}}(\exc{i}\dual)\in\clr_k$ and
$\FMa{\rr{k}}(\exc{i}\dual)\in\crr_k$, we deduce that
$\FMa{\lrd{k}}(\exc{i}\dual)$ is an isomorphism for $i\le k$, while
$\FMa{\lr{k}}(\exc{i}\dual)=0$ for $i>k$. Thus for $0<k\le m$ the
equality $\FMa{\lrd{k-1}}(\exc{i}\dual)=\FMa{\lrd{k}}(\exc{i}\dual)
\comp\FMa{\lrlr{k}}(\exc{i}\dual)$ implies that
$\FMa{\lrlr{k}}(\exc{i}\dual)$ is an isomorphism for $i<k$. Hence from
the distinguished triangle
\[\FMa{\excp{k}}(\exc{i}\dual)[-1]\to\FMa{\lr{k-1}}(\exc{i}\dual)
\mor{\FMa{\lrlr{k}}(\exc{i}\dual)}\FMa{\lr{k}}(\exc{i}\dual)\to
\FMa{\excp{k}}(\exc{i}\dual)\]
we get $\FMa{\excp{k}}(\exc{i}\dual)\iso(\exc{k}\dual)^{\delta_{i,k}}$
(notice that, by what we have already proved, this is true also for
$k=0$), which is equivalent, by \eqref{FME}, to
\begin{equation}\label{Eort}
\Hom_Y(\exc{i},\s{F}_k)\iso\K^{\delta_{i,k}}
\end{equation}
for $0\le i,k\le m$. In order to conclude that
$\s{F}_k\iso\excd{k}$ one can proceed as follows. As
$(\excd{m},\dots,\excd{0})$ is a full exceptional sequence in
$\D[b](Y)$, there exists (unique up to isomorphism) a distinguished
triangle
\[\s{A}_k\to\s{F}_k\to\s{A}'_k\to\s{A}_k[1]\]
with $\s{A}_k\in\trspan{\excd{0},\dots,\excd{k-1}}$ and
$\s{A}'_k\in\trspan{\excd{k},\dots,\excd{m}}$. Now, if $i\ge
k$, $\Hom_Y(\exc{i},\s{A}_k)=0$ by \ref{ort}, whereas if $i<k$,
$\Hom_Y(\exc{i},\s{A}'_k)=0$ always by \ref{ort} and
$\Hom_Y(\exc{i},\s{F}_k)=0$ by \eqref{Eort}, so that
$\Hom_Y(\exc{i},\s{A}_k)=0$ also in this case. Clearly this implies
$\s{A}_k=0$ and $\s{F}_k\iso\s{A}'_k\in
\trspan{\excd{k},\dots,\excd{m}}$. In a similar way, there
exists (unique up to isomorphism) a distinguished triangle
\[\s{B}_k\to\s{F}_k\to\s{B}'_k\to\s{B}_k[1]\]
with $\s{B}_k\in\trspan{\excd{k}}$ and $\s{B}'_k\in
\trspan{\excd{k+1},\dots,\excd{m}}$. This time
$\Hom_Y(\exc{i},\s{B}'_k)=0$ (if $i\le k$ by \ref{ort}, if $i>k$
because $\Hom_Y(\exc{i},\s{B}_k)=0$ by \ref{ort} and
$\Hom_Y(\exc{i},\s{F}_k)=0$ by \eqref{Eort}), whence $\s{B}'_k=0$ and
$\s{F}_k\iso\s{B}_k\in\trspan{\excd{k}}$. Then, since
$\Hom_Y(\exc{k},\s{F}_k)\iso\K\iso\Hom_Y(\exc{k},\excd{k})$, we must
have $\s{F}_k\iso\excd{k}$.
\end{proof}

Hence for $0<k\le m$ there is a distinguished triangle in
$\D[b](Y\times Y)$
\begin{equation}\label{diagtr}
\lr{k-1}\mor{\lrlr{k}}\lr{k}\mor{\lre{k}}\excp{k}\mor{\elr{k}}\lr{k-1}[1]
\end{equation}
(where $\excp{k}:=\excd{k}\boxtimes\exc{k}\dual$). Notice that the
pair $(\lre{k},\elr{k})$ is not uniquely determined, but, as clearly
$\excp{k}$ is an exceptional object, it could only be changed by
$(\lambda\lre{k},\lambda^{-1}\elr{k})$ for some $\lambda\in\K^*$. By
(TR4) (or by (TR3) and \ref{cone!}) there exists (unique because
$\Hom(\lr{k-1}[1],\rr{k-1})=0$) a morphism
$\err{k}\colon\excp{k}\to\rr{k-1}$ such that the diagram
\[\xymatrix{\lr{k-1} \ar[rr]^-{\lrlr{k}}
\ar[d]^{\id} & & \lr{k} \ar[rr]^-{\lre{k}}
\ar[d]^{\lrd{k}} & &
\excp{k} \ar[rr]^-{\elr{k}} \ar[d]^{\err{k}} & &
\lr{k-1}[1] \ar[d]^{\id} \\
\lr{k-1} \ar[rr]^-{\lrd{k-1}} & & \sod \ar[rr]^-{\drr{k-1}} & &
\rr{k-1} \ar[rr]^-{\rrlr{k-1}} & & \lr{k-1}[1]}\]
commutes and with $\cone{\err{k}}\iso\rr{k}$.

\begin{lemm}\label{EL}
For $0<k\le m$ there is an isomorphism
$\excd{k}\iso\FM{\rr{k-1}}(\exc{k})$ with the property that the
induced morphism
\[\excp{k}=\excd{k}\boxtimes\exc{k}\dual\iso
\FM{\rr{k-1}}(\exc{k})\boxtimes\exc{k}\dual
\mor{\multind{\rr{k-1}}{\exc{k}}}\rr{k-1}\]
(see \eqref{multind} for the definition of
$\multind{\rr{k-1}}{\exc{k}}$) coincides with $\err{k}$.
\end{lemm}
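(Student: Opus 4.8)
The plan is to apply the exact functor $\FM{\farg}(\exc{k})\colon\D[b](Y\times Y)\to\D[b](Y)$ (that is, $\s{A}\mapsto\FM{\s{A}}(\exc{k})$) to the distinguished triangle on $\err{k}$, whose third term is $\cone{\err{k}}\iso\rr{k}$, namely
\[\excp{k}\mor{\err{k}}\rr{k-1}\to\rr{k}\to\excp{k}[1].\]
The first point is that $\FM{\rr{k}}(\exc{k})=0$. Indeed, the full subcategory of $\D[b](Y\times Y)$ annihilated by the (additive, triangulated) functor $\FM{\farg}(\exc{k})$ is a thick triangulated subcategory, and by \ref{splitker} it contains $\s{F}\boxtimes\exc{j}\dual$ for every $\s{F}\in\D[b](Y)$ and every $j>k$, since $\FM{\s{F}\boxtimes\exc{j}\dual}(\exc{k})\iso\s{F}\otimes_{\K}\Hom_Y(\exc{j},\exc{k})=0$ (using $(\exc{j}\dual)\dual\iso\exc{j}$ and the vanishing axioms for the exceptional sequence); therefore it contains $\D[b](Y)\boxtimes\crr_k$, hence $\rr{k}$. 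Applying $\FM{\farg}(\exc{k})$ to the triangle above then shows that $\FM{\err{k}}(\exc{k})\colon\FM{\excp{k}}(\exc{k})\to\FM{\rr{k-1}}(\exc{k})$ is an isomorphism.

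The heart of the argument is to control the natural morphism $\multind{\excp{k}}{\exc{k}}\colon\FM{\excp{k}}(\exc{k})\boxtimes\exc{k}\dual\to\excp{k}$. Unwinding \eqref{multind} and using the associativity of $\FMcomp$, it is the image under the functor $\excd{k}\FMcomp\farg$ of the morphism $\id_{\exc{k}\tdual}\FMcomp\mult{\exc{k}}\colon\exc{k}\tdual\FMcomp\exc{k}\FMcomp\exc{k}\tdual\to\exc{k}\tdual$. By part (3) of \ref{compat}, applied with $\Ker=\exc{k}$ regarded as an object of $\D[b](Y\times\pt)$, this morphism is a split epimorphism; since $\exc{k}$ is exceptional, \ref{splitker} gives $\exc{k}\tdual\FMcomp\exc{k}\iso\Hom_Y(\exc{k},\exc{k})\iso\K$, so both its source and its target are isomorphic to $\exc{k}\tdual\iso\exc{k}\dual$, which is an exceptional object, and a split epimorphism between such objects is an isomorphism. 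Hence $\multind{\excp{k}}{\exc{k}}$ is an isomorphism. Moreover the functor $\farg\boxtimes\exc{k}\dual\colon\D[b](Y)\to\D[b](Y\times Y)$ is fully faithful, by the Künneth isomorphism $\Hom_{Y\times Y}(\s{A}\boxtimes\exc{k}\dual,\s{A}'\boxtimes\exc{k}\dual)\iso\Hom_Y(\s{A},\s{A}')\otimes_{\K}\Hom_Y(\exc{k}\dual,\exc{k}\dual)\iso\Hom_Y(\s{A},\s{A}')$, and both $\FM{\excp{k}}(\exc{k})\boxtimes\exc{k}\dual$ and $\excp{k}=\excd{k}\boxtimes\exc{k}\dual$ lie in its essential image (note $\FM{\excp{k}}(\exc{k})\iso\excd{k}\otimes_{\K}\Hom_Y(\exc{k},\exc{k})\iso\excd{k}$ by \ref{splitker}); so $\multind{\excp{k}}{\exc{k}}=\psi\boxtimes\id_{\exc{k}\dual}$ for a unique morphism $\psi\colon\FM{\excp{k}}(\exc{k})\to\excd{k}$, which is then an isomorphism.

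To conclude, set $\phi:=\FM{\err{k}}(\exc{k})\comp\psi^{-1}\colon\excd{k}\isomor\FM{\rr{k-1}}(\exc{k})$. Applying the functoriality of $\multind{\farg}{\exc{k}}$ (the commutative square \eqref{natmultind}) to $\err{k}\colon\excp{k}\to\rr{k-1}$ gives
\[\multind{\rr{k-1}}{\exc{k}}\comp(\FM{\err{k}}(\exc{k})\boxtimes\id)=\err{k}\comp\multind{\excp{k}}{\exc{k}}=\err{k}\comp(\psi\boxtimes\id),\]
and composing on the right with $\psi^{-1}\boxtimes\id$ yields $\multind{\rr{k-1}}{\exc{k}}\comp(\phi\boxtimes\id)=\err{k}$, which is precisely the asserted identity with the isomorphism $\excd{k}\iso\FM{\rr{k-1}}(\exc{k})$ taken to be $\phi$.

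The step I expect to be the main obstacle is the middle one: knowing merely that $\FM{\excp{k}}(\exc{k})\iso\excd{k}$ is not enough, because one needs this isomorphism to be compatible with the morphism $\multind{\excp{k}}{\exc{k}}$ that feeds into the later constructions (it is via $\multind{\rr{k-1}}{\exc{k}}$ that $\err{k}$ is to be realized). The key is to recognise $\multind{\excp{k}}{\exc{k}}$ as the image, under $\excd{k}\FMcomp\farg$, of one of the triangle-identity morphisms of part (3) of \ref{compat}, so that the exceptionality of $\exc{k}$ (which collapses $\exc{k}\tdual\FMcomp\exc{k}$ to $\K$) upgrades a split epimorphism to an isomorphism; with that in hand, the rest is routine diagram chasing inside the formalism of section \ref{comp}.
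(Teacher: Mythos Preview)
Your proof is correct and shares the paper's overall architecture: both first show $\FM{\rr{k}}(\exc{k})=0$ (so that $\FM{\err{k}}(\exc{k})$ is an isomorphism) and then use the naturality square \eqref{natmultind} for $\err{k}$ to reduce everything to controlling $\multind{\excp{k}}{\exc{k}}$. The one genuine difference is precisely in the step you flagged as the main obstacle. The paper fixes from the outset the \emph{specific} isomorphism $\excd{k}\iso\FM{\excp{k}}(\exc{k})$ coming from \ref{splitker} and asserts (``not difficult to check directly'') that under it the composite $\excp{k}\isomor\FM{\excp{k}}(\exc{k})\boxtimes\exc{k}\dual\mor{\multind{\excp{k}}{\exc{k}}}\excp{k}$ is the identity; this is a direct unwinding of the definitions. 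You instead argue abstractly: the triangle identity in part~(3) of \ref{compat} with $\Ker=\exc{k}$ makes $\id_{\exc{k}\tdual}\FMcomp\mult{\exc{k}}$ a split epimorphism between objects that exceptionality forces to have one-dimensional $\Hom$, hence an isomorphism, and then K\"unneth full faithfulness of $\farg\boxtimes\exc{k}\dual$ extracts $\psi$. Your route trades the paper's explicit check for an appeal to \ref{compat}(3) and K\"unneth, which is arguably cleaner. Since the lemma only asks for \emph{some} compatible isomorphism, your $\psi^{-1}$ need not agree with the paper's natural one---though as $\excd{k}$ is exceptional they can differ only by a scalar in $\K^*$, so in effect they do.
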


\begin{proof}
Since $\FM{\excp{i}}(\exc{k})\iso
\excd{i}\otimes_{\K}\Hom_Y(\exc{i},\exc{k})$ by \ref{splitker}, we
have $\FM{\excp{i}}(\exc{k})=0$ for $i>k$, while there is a natural
isomorphism $\FM{\excp{k}}(\exc{k})\iso\excd{k}$. It follows easily
that $\FM{\rr{k}}(\exc{k})=0$, hence from the distinguished triangle
\[\FM{\excp{k}}(\exc{k})\mor{\FM{\err{k}}(\exc{k})}
\FM{\rr{k-1}}(\exc{k})\to\FM{\rr{k}}(\exc{k})\to\FM{\excp{k}}(\exc{k})[1]\]
we see that $\FM{\err{k}}(\exc{k})$ is an isomorphism. We claim that
the isomorphism we are looking for can be chosen to be
$\excd{k}\iso\FM{\excp{k}}(\exc{k})\mor{\FM{\err{k}}(\exc{k})}
\FM{\rr{k-1}}(\exc{k})$. Indeed, this amounts to say that the diagram
\[\xymatrix{\excp{k}=\excd{k}\boxtimes\exc{k}\dual \ar[rr]^-{\sim} 
\ar[drr]_{\id} & & \FM{\excp{k}}(\exc{k})\boxtimes\exc{k}\dual
\ar[rr]^-{\FM{\err{k}}(\exc{k})\boxtimes\id}
\ar[d]^{\multind{\excp{k}}{\exc{k}}} & &
\FM{\rr{k-1}}(\exc{k})\boxtimes\exc{k}\dual
\ar[d]^{\multind{\rr{k-1}}{\exc{k}}} \\
 & & \excp{k} \ar[rr]_-{\err{k}} & & \rr{k-1}}\]
commutes. Now, it is not difficult to check directly that the triangle
commutes, whereas the square commutes by \eqref{natmultind}.
\end{proof}

\section{Main Theorem}\label{main}

Let $X$ and $Y$ be stacks and $\Ker$ an object of $\D[b](X\times
Y)$. As in section \ref{resol} (whose notation will be used) we assume
that $\D[b](Y)$ admits a full exceptional sequence
$(\exc{0},\dots,\exc{m})$, and moreover that
$\fun:=\FM{\Ker}\colon\D[b](Y)\to\D[b](X)$ induces isomorphisms
\begin{equation}\label{maincond}
\Hom_Y(\exc{i},\exc{j})\isomor\Hom_X(\fun(\exc{i}),\fun(\exc{j}))
\text{ for }0\le i<j\le m.
\end{equation}
Defining $\funp$ as in \eqref{funp}, for $0\le k\le m$ we set
$\flrd{k}:=\fundiag\comp\funp(\lrd{k})\colon\funp(\lr{k})\to\sod[X]$
and extend it to a distinguished triangle
$\funp(\lr{k})\mor{\flrd{k}}\sod[X]
\mor{\dc{k}}\cone{\flrd{k}}\mor{\cflr{k}}\funp(\lr{k})[1]$. Let
moreover $\frrc{k}\colon\funp(\rr{k})\to\cone{\flrd{k}}$ be a morphism
such that the diagram
\[\xymatrix{\funp(\lr{k}) \ar[rr]^-{\funp(\lrd{k})}
\ar[d]^{\id} & & \funp(\sod[Y]) \ar[rr]^-{\funp(\drr{k})}
\ar[d]^{\fundiag} & &
\funp(\rr{k}) \ar[rr]^-{\siso{\funp}(\lr{k})\comp\funp(\rrlr{k})} 
\ar[d]^{\frrc{k}} & & \funp(\lr{k})[1] \ar[d]^{\id} \\
\funp(\lr{k}) \ar[rr]_-{\flrd{k}} & &
\sod[X] \ar[rr]_-{\dc{k}} & &
\cone{\flrd{k}} \ar[rr]_-{\cflr{k}} & &
\funp(\lr{k})[1]}\]
commutes (such a morphism exists by (TR3) or by (TR4)).

\begin{lemm}\label{key}
If $\fun=\FM{\Ker}$ satisfies \eqref{maincond}, then the composition
\[\fun(\FM{\rr{k}}(\exc{k+1}))\mor{\comultind{\rr{k}}{\exc{k+1}}}
\FM{\funp(\rr{k})}(\fun(\exc{k+1}))\mor{\FM{\frrc{k}}(\fun(\exc{k+1}))}
\FM{\cone{\flrd{k}}}(\fun(\exc{k+1}))\]
(see \eqref{comultind} for the definition of
$\comultind{\rr{k}}{\exc{k+1}}$) is an isomorphism in $\D[b](X)$ for
$0\le k<m$.
\end{lemm}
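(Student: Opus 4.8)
The plan is to exhibit the composition of the lemma as the right-hand vertical arrow in a morphism of distinguished triangles whose left and middle vertical arrows are isomorphisms, and then to invoke the standard fact that in a morphism of distinguished triangles, if two of the three components are isomorphisms then so is the third.

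The two triangles are obtained by applying exact functors to triangles already in hand. The exact functor $\D[b](Y\times Y)\to\D[b](X)$, $\s{A}\mapsto\fun(\FM{\s{A}}(\exc{k+1}))$, carries the distinguished triangle $\lr{k}\mor{\lrd{k}}\sod[Y]\mor{\drr{k}}\rr{k}\mor{\rrlr{k}}\lr{k}[1]$ of section \ref{resol} to a distinguished triangle
\[\fun(\FM{\lr{k}}(\exc{k+1}))\to\fun(\exc{k+1})\to\fun(\FM{\rr{k}}(\exc{k+1}))\to\fun(\FM{\lr{k}}(\exc{k+1}))[1],\]
while the exact functor $\D[b](X\times X)\to\D[b](X)$, $\s{B}\mapsto\FM{\s{B}}(\fun(\exc{k+1}))$, carries the triangle $\funp(\lr{k})\mor{\flrd{k}}\sod[X]\mor{\dc{k}}\cone{\flrd{k}}\mor{\cflr{k}}\funp(\lr{k})[1]$ to a distinguished triangle
\[\FM{\funp(\lr{k})}(\fun(\exc{k+1}))\to\fun(\exc{k+1})\to\FM{\cone{\flrd{k}}}(\fun(\exc{k+1}))\to\FM{\funp(\lr{k})}(\fun(\exc{k+1}))[1]\]
(here $\FM{\sod[Y]}(\exc{k+1})$ and $\FM{\sod[X]}(\fun(\exc{k+1}))$ are identified with $\exc{k+1}$ and $\fun(\exc{k+1})$ via \ref{assoc}). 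I would take $\comultind{\lr{k}}{\exc{k+1}}$ on the left, $\id_{\fun(\exc{k+1})}$ in the middle, the composition of the lemma $\fun(\FM{\rr{k}}(\exc{k+1}))\to\FM{\cone{\flrd{k}}}(\fun(\exc{k+1}))$ on the right, and $\comultind{\lr{k}}{\exc{k+1}}[1]$ on the far right, and check that these four arrows form a morphism of these triangles.

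That $\comultind{\lr{k}}{\exc{k+1}}$ is an isomorphism is the point where hypothesis \eqref{maincond} enters. By \ref{natcomultind}, $\s{A}\mapsto\comultind{\s{A}}{\exc{k+1}}$ is a morphism of exact functors (compatible with shifts, directly from \eqref{comultind}), so the objects $\s{A}$ on which it is an isomorphism form a triangulated subcategory of $\D[b](Y\times Y)$; since $\lr{k}$ lies in the triangulated subcategory generated by $\excp{0},\dots,\excp{k}$ by \ref{diagres} and \eqref{diagtr}, it suffices to treat $\s{A}=\excp{i}=\excd{i}\boxtimes\exc{i}\dual$ for $i\le k$. There \ref{Fprod} and \ref{splitker} identify $\comultind{\excp{i}}{\exc{k+1}}$ with $\id_{\fun(\excd{i})}\otimes_{\K}$ (the natural map $\Hom_Y(\exc{i},\exc{k+1})\to\Hom_X(\fun(\exc{i}),\fun(\exc{k+1}))$), which is an isomorphism by \eqref{maincond} because $i<k+1\le m$.

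It remains to check the three squares of the putative morphism of triangles. The square for the first arrows follows by combining \ref{natcomultind} (for $\lrd{k}$) with \ref{split}, using $\flrd{k}=\fundiag\comp\funp(\lrd{k})$. The square for the second arrows follows from \ref{natcomultind} (for $\drr{k}$), the relation $\frrc{k}\comp\funp(\drr{k})=\dc{k}\comp\fundiag$ read off the defining diagram of $\frrc{k}$, and \ref{split}. The last square follows from \ref{natcomultind} (for $\rrlr{k}$), the relation $\cflr{k}\comp\frrc{k}=\siso{\funp}(\lr{k})\comp\funp(\rrlr{k})$ from the same diagram, and the compatibility of $\comultind{\farg}{\exc{k+1}}$ with shifts. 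Then the left and middle vertical arrows are isomorphisms, so the right-hand one — the composition of the lemma — is an isomorphism too. I expect the main obstacle to be not any single deep step but the bookkeeping here: one must carry along the several natural isomorphisms (those of \ref{assoc}, the exactness isomorphisms of $\fun$, $\funp$ and $\FMcomp$, and $\FM{\sod[Y]}(\exc{k+1})\iso\exc{k+1}$, $\FM{\sod[X]}(\fun(\exc{k+1}))\iso\fun(\exc{k+1})$) and check that the prescribed arrows — in particular that the \emph{given} composition, and not merely some completion of the first two vertical arrows — really do form a morphism of distinguished triangles.
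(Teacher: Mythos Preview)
Your argument is correct and matches the paper's proof in substance: both set up a morphism between the two distinguished triangles obtained from $\lr{k}\to\sod[Y]\to\rr{k}$ and $\funp(\lr{k})\to\sod[X]\to\cone{\flrd{k}}$, identify the middle arrow with $\id_{\fun(\exc{k+1})}$ via \ref{split}, and reduce to showing $\comultind{\lr{k}}{\exc{k+1}}$ is an isomorphism by checking it on each $\excp{i}$ using \eqref{maincond}. The only cosmetic differences are that the paper writes the diagram in three rows (factoring through the $\funp(\sod[Y])$-triangle) rather than two, and carries out the reduction to $\excp{i}$ by an explicit induction on $i$ rather than invoking the ``objects on which a natural transformation is an isomorphism form a triangulated subcategory'' principle.
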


\begin{proof}
Setting for brevity $G(\farg):=\fun(\FM{\farg}(\exc{k+1}))$,
$H(\farg):=\FM{\farg}(\fun(\exc{k+1}))$ and $\tilde{H}:=H\comp\funp$,
there is a commutative diagram (by \eqref{natcomultind})
\[\xymatrix{G(\lr{k}) \ar[r]^-{G(\lrd{k})}
\ar[d]^{\comultind{\lr{k}}{\exc{k+1}}} &
G(\sod[Y]) \ar[r]^-{G(\drr{k})} \ar[d]^{\comultind{\sod[Y]}{\exc{k+1}}} &
G(\rr{k}) \ar[rr]^-{\siso{G}(\lr{k})\comp G(\rrlr{k})} 
\ar[d]^{\comultind{\rr{k}}{\exc{k+1}}} & & G(\lr{k})[1]
\ar[d]^{\comultind{\lr{k}}{\exc{k+1}}[1]} \\
\tilde{H}(\lr{k}) \ar[r]_-{\tilde{H}(\lrd{k})} \ar[d]^{\id} &
\tilde{H}(\sod[Y]) \ar[r]_-{\tilde{H}(\drr{k})} \ar[d]^{H(\fundiag)} &
\tilde{H}(\rr{k}) \ar[d]^{H(\frrc{k})}
\ar[rr]_-{\siso{\tilde{H}}(\lr{k})\comp\tilde{H}(\rrlr{k})} & &
\tilde{H}(\lr{k})[1] \ar[d]^{\id} \\
\tilde{H}(\lr{k}) \ar[r]_-{H(\flrd{k})} &
H(\sod[X]) \ar[r]_-{H(\dc{k})} & H(\cone{\flrd{k}}) 
\ar[rr]_-{\siso{H}(\funp(\lr{k}))\comp H(\cflr{k})} & &
\tilde{H}(\lr{k})[1]}\]
whose rows are distinguished triangles.
As $H(\fundiag)\comp\comultind{\sod[Y]}{\exc{k+1}}=
\FM{\fundiag}(\fun(\exc{k+1}))\comp\comultind{\sod[Y]}{\exc{k+1}}$ is an
isomorphism by \ref{split},
$H(\frrc{k})\comp\comultind{\rr{k}}{\exc{k+1}}=
\FM{\frrc{k}}(\fun(\exc{k+1}))\comp\comultind{\rr{k}}{\exc{k+1}}$ is an
isomorphism if (and only if) $\comultind{\lr{k}}{\exc{k+1}}$ is
an isomorphism. In fact it can be proved that
$\comultind{\lr{i}}{\exc{k+1}}$ is an isomorphism for $0\le i\le k$. To
this purpose it is enough to prove that $\comultind{\excp{i}}{\exc{k+1}}$
is an isomorphism for $0\le i\le k$, because then, remembering that
$\lr{0}\iso\excp{0}$ by \ref{diagres}, one can proceed by induction on
$i$, using the commutative diagrams (whose rows are distinguished
triangles by \eqref{diagtr})
\[\xymatrix{G(\lr{i-1}) \ar[rr]^-{G(\lrlr{i})}
\ar[d]_{\comultind{\lr{i-1}}{\exc{k+1}}} & & G(\lr{i}) 
\ar[rr]^-{G(\lre{i})} \ar[d]_{\comultind{\lr{i}}{\exc{k+1}}} & &
G(\excp{i}) \ar[rr]^-{\siso{G}(\lr{i-1})\comp G(\elr{i})} 
\ar[d]_{\comultind{\excp{i}}{\exc{k+1}}} & &
G(\lr{i-1})[1] \ar[d]_{\comultind{\lr{i-1}}{\exc{k+1}}[1]} \\
\tilde{H}(\lr{i-1}) \ar[rr]_-{\tilde{H}(\lrlr{i})} & & \tilde{H}(\lr{i})
\ar[rr]_-{\tilde{H}(\lre{i})} & & \tilde{H}(\excp{i})
\ar[rr]_-{\siso{\tilde{H}}(\lr{i-1})\comp\tilde{H}(\elr{i})} & &
\tilde{H}(\lr{i-1})[1]}\]
for $0<i\le k$. In order to prove that
\[\comultind{\excp{i}}{\exc{k+1}}\colon\fun(\FM{\excp{i}}(\exc{k+1}))
\to\FM{\funp(\excp{i})}(\fun(\exc{k+1}))\]
is an isomorphism for $0\le i\le k$, notice that by \ref{splitker}
\[\fun(\FM{\excp{i}}(\exc{k+1}))\iso
\fun(\excd{i}\otimes_{\K}\Hom_Y(\exc{i},\exc{k+1}))\iso
\fun(\excd{i})\otimes_{\K}\Hom_Y(\exc{i},\exc{k+1})\]
and (since $\funp(\excp{i})\iso
\fun(\excd{i})\boxtimes\fun(\exc{i})\dual$ by \ref{Fprod})
\[\FM{\funp(\excp{i})}(\fun(\exc{k+1}))\iso
\fun(\excd{i})\otimes_{\K}\Hom_X(\fun(\exc{i}),\fun(\exc{k+1})).\]
It is then easy to see that $\comultind{\excp{i}}{\exc{k+1}}$ can be
identified with the natural map, hence it is an isomorphism by
\eqref{maincond}.
\end{proof}

For $\s{F}\in\D[b](X)$ we set $\ST{\s{F}}:=
\FM{\cone{\mult{\s{F}}\colon\s{F}\boxtimes\s{F}\dual\to\sod}}
\colon\D[b](X)\to\D[b](X)$ (by \cite{ST} $\ST{\s{F}}$ is an
equivalence if $\s{F}$ is a spherical object).

\begin{theo}\label{mthm}
If $\fun=\FM{\Ker}\colon\D[b](Y)\to\D[b](X)$ satisfies
\eqref{maincond}, then
\[\ST{\fun(\exc{0})}\comp\cdots\comp\ST{\fun(\exc{m})}\iso
\FM{\cone{\fundiag\colon\funp(\sod[Y])\to\sod[X]}}.\]
\end{theo}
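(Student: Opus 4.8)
The plan is to prove the isomorphism by induction on $k$, establishing the stronger statement
\[
\ST{\fun(\exc{0})}\comp\cdots\comp\ST{\fun(\exc{k})}\iso
\FM{\cone{\flrd{k}\colon\funp(\lr{k})\to\sod[X]}}
\]
for $0\le k\le m$; the case $k=m$ gives the theorem since $\lr{m}\iso\sod[Y]$ (so $\flrd{m}=\fundiag$) by property (3) of the $\lr{k}$ in section \ref{resol}. The base case $k=0$ reduces to the identification $\funp(\lr{0})\iso\funp(\exc{0}\boxtimes\exc{0}\dual)\iso\fun(\exc{0})\boxtimes\fun(\exc{0})\dual$ from \ref{Fprod} and \ref{diagres}, together with the fact that under this isomorphism $\flrd{0}$ becomes $\mult{\fun(\exc{0})}$, which is exactly the content of \ref{natmult}; hence $\cone{\flrd{0}}$ is the kernel of $\ST{\fun(\exc{0})}$ by definition.

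For the inductive step, assume the statement for $k-1$. By remark \ref{FMcomp}, $\ST{\fun(\exc{0})}\comp\cdots\comp\ST{\fun(\exc{k})}$ is the Fourier--Mukai functor with kernel $\cone{\flrd{k-1}}\FMcomp\cone{\mult{\fun(\exc{k})}}$, so it suffices to produce an isomorphism of kernels $\cone{\flrd{k-1}}\FMcomp\cone{\mult{\fun(\exc{k})}}\iso\cone{\flrd{k}}$. The idea is to use the distinguished triangle \eqref{diagtr}, namely $\lr{k-1}\mor{\lrlr{k}}\lr{k}\to\excp{k}=\excd{k}\boxtimes\exc{k}\dual\to\lr{k-1}[1]$. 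Applying $\funp$ and then passing to cones over $\sod[X]$ via $\flrd{k}$ and $\flrd{k-1}$, one gets (using $\flrd{k-1}=\flrd{k}\comp\funp(\lrlr{k})$, which follows from $\lrd{k-1}=\lrd{k}\comp\lrlr{k}$ and functoriality of $\funp$ and $\fundiag\comp$) a distinguished triangle relating $\cone{\flrd{k-1}}$, $\cone{\flrd{k}}$, and $\funp(\excp{k})$; by the octahedral axiom this exhibits $\cone{\flrd{k}}$ as (a cone of a morphism from) $\funp(\excp{k})[-1]$ into $\cone{\flrd{k-1}}$, i.e.\ there is a triangle $\funp(\excp{k})[-1]\to\cone{\flrd{k-1}}\to\cone{\flrd{k}}\to\funp(\excp{k})$.

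On the other hand, $\cone{\flrd{k-1}}\FMcomp\cone{\mult{\fun(\exc{k})}}$ fits into a triangle obtained by applying $\cone{\flrd{k-1}}\FMcomp\farg$ to $\fun(\exc{k})\boxtimes\fun(\exc{k})\dual\mor{\mult{\fun(\exc{k})}}\sod[X]\to\cone{\mult{\fun(\exc{k})}}\to$. Here $\cone{\flrd{k-1}}\FMcomp\sod[X]\iso\cone{\flrd{k-1}}$ by \ref{assoc}, and $\cone{\flrd{k-1}}\FMcomp(\fun(\exc{k})\boxtimes\fun(\exc{k})\dual)\iso\FM{\cone{\flrd{k-1}}}(\fun(\exc{k}))\boxtimes\fun(\exc{k})\dual$ by \ref{compat}(1); applying the functor $\farg\boxtimes\fun(\exc{k})\dual$ to the composition in \ref{key} (with $k-1$ in place of $k$, so identifying $\FM{\cone{\flrd{k-1}}}(\fun(\exc{k}))\iso\fun(\FM{\rr{k-1}}(\exc{k}))\iso\fun(\excd{k})$ via \ref{EL}, since $\ST{\fun(\exc{0})}\comp\cdots\comp\ST{\fun(\exc{k-1})}$ applied to $\fun(\exc{k})$ is computed by that kernel) identifies this with $\fun(\excd{k})\boxtimes\fun(\exc{k})\dual\iso\funp(\excd{k}\boxtimes\exc{k}\dual)=\funp(\excp{k})$, again by \ref{Fprod}. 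Thus $\cone{\flrd{k-1}}\FMcomp\cone{\mult{\fun(\exc{k})}}$ sits in a triangle $\funp(\excp{k})[-1]\to\cone{\flrd{k-1}}\to\cone{\flrd{k-1}}\FMcomp\cone{\mult{\fun(\exc{k})}}\to\funp(\excp{k})$.

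The main obstacle, as the introduction warns, is that cones are not functorial, so it is not enough to have the two triangles above with matching outer terms: one must check that the morphism $\funp(\excp{k})[-1]\to\cone{\flrd{k-1}}$ arising from the $\FMcomp\cone{\mult{\fun(\exc{k})}}$ construction coincides (up to sign/scalar) with the one coming from $\funp$ applied to \eqref{diagtr}. This is where \ref{multcomult}, \ref{EL}, the naturality square \eqref{natmultind}, and the diagram defining $\err{k}$ enter: one traces the boundary map of the triangle through the chain of identifications, reducing the claim to commutativity of explicit diagrams built from $\mult{}$, $\comult{}$, and the $\multind{}{}$, $\comultind{}{}$ morphisms. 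Once the two connecting morphisms agree, the finiteness property of the triangulated categories recalled in the appendix (the uniqueness-of-cone-up-to-isomorphism statement) lets one conclude $\cone{\flrd{k-1}}\FMcomp\cone{\mult{\fun(\exc{k})}}\iso\cone{\flrd{k}}$, completing the induction.
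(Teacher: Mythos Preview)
Your strategy matches the paper's: induction on $k$ proving $\cone{\mult{\fun(\exc{0})}}\FMcomp\cdots\FMcomp\cone{\mult{\fun(\exc{k})}}\iso\cone{\flrd{k}}$, with the base case from \ref{diagres} and \ref{natmult}, and the inductive step assembled from \ref{key}, \ref{EL}, \ref{Fprod}, \ref{multcomult}, \eqref{natmultind}, and \ref{cone!}.

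Two small corrections. First, the octahedron for $\funp(\lr{k-1})\mor{\funp(\lrlr{k})}\funp(\lr{k})\mor{\flrd{k}}\sod[X]$ yields the triangle $\funp(\excp{k})\to\cone{\flrd{k-1}}\to\cone{\flrd{k}}\to\funp(\excp{k})[1]$, so both of your displayed triangles are off by a shift (drop the $[-1]$). Second, and more substantively, the way you invoke \ref{cone!} is not quite right. The connecting map $\funp(\excp{k})\to\cone{\flrd{k-1}}$ supplied by the octahedral axiom is not canonical, so there is no well-posed question of whether it ``coincides'' with the explicit map $\fec{k}$ built on the $\cone{\flrd{k-1}}\FMcomp\farg$ side; and if they did coincide, ordinary uniqueness of cones would already finish without any finiteness hypothesis. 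What the paper actually does is verify that the explicitly constructed $\fec{k}$ (your composition through $\comultind{\rr{k-1}}{\exc{k}}$, $\FM{\frrc{k-1}}$, $\multind{\cone{\flrd{k-1}}}{\fun(\exc{k})}$, and the identifications of \ref{EL} and \ref{Fprod}) makes the square
\[
\begin{CD}
\funp(\lr{k}) @>{\funp(\lre{k})}>> \funp(\excp{k}) \\
@V{\flrd{k}}VV @VV{\fec{k}}V \\
\sod[X] @>{\dc{k-1}}>> \cone{\flrd{k-1}}
\end{CD}
\]
commute; the verification is exactly the diagram chase you name, factoring $\fec{k}$ as $\frrc{k-1}\comp\funp(\err{k})$ via \ref{EL}, \ref{multcomult}, and \eqref{natmultind}. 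This exhibits $\fec{k}$ as the third vertical arrow in a morphism of triangles with identity on the $\funp(\lr{k-1})$ vertex, and then \ref{cone!} --- whose content is that under the finiteness hypothesis \emph{every} such completion satisfies $\cone{\fec{k}}\iso\cone{\flrd{k}}$, not merely the one furnished by (TR4) --- gives the conclusion.
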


\begin{proof}
Defining for $0\le k\le m$
\begin{equation*}
\tc{k}:=\cone{\mult{\fun(\exc{0})}}\FMcomp\cdots\FMcomp
\cone{\mult{\fun(\exc{k})}},
\end{equation*}
by \ref{FMcomp} it is enough to prove that
$\tc{m}\iso\cone{\fundiag}$. We will show that in fact
$\tc{k}\iso\cone{\flrd{k}}$ for $0\le k\le m$: the case $k=m$ yields
the thesis because $\flrd{m}=\fundiag\comp\funp(\lrd{m})$ and
$\funp(\lrd{m})$ is an isomorphism, whence
$\cone{\flrd{m}}\iso\cone{\fundiag}$.  We proceed by induction on $k$:
the case $k=0$ follows from \ref{natmult}, since $\mult{\exc{0}}$ can
be identified with $\lrd{0}$. So let $0<k\le m$ and assume that
$\tc{k-1}\iso\cone{\flrd{k-1}}$; then by definition
\begin{multline*}
\tc{k}=\tc{k-1}\FMcomp\cone{\mult{\fun(\exc{k})}\colon 
\fun(\exc{k})\boxtimes\fun(\exc{k})\dual\to\sod[X]} \\
\iso\cone{\id\FMcomp\mult{\fun(\exc{k})}\colon
\cone{\flrd{k-1}}\FMcomp(\fun(\exc{k})\boxtimes\fun(\exc{k})\dual)\to
\cone{\flrd{k-1}}\FMcomp\sod[X]} \\
\iso\cone{\multind{\cone{\flrd{k-1}}}{\fun(\exc{k})}\colon
\FM{\cone{\flrd{k-1}}}(\fun(\exc{k}))\boxtimes\fun(\exc{k})\dual\to
\cone{\flrd{k-1}}}.
\end{multline*}
Denoting by $\fec{k}\colon\funp(\excp{k})\to\cone{\flrd{k-1}}$ the
composition
\begin{multline*}
\funp(\excp{k})\iso
\funp(\FM{\rr{k-1}}(\exc{k})\boxtimes\exc{k}\dual)\iso
\fun(\FM{\rr{k-1}}(\exc{k}))\boxtimes\fun(\exc{k})\dual \\
\mor{\comultind{\rr{k-1}}{\exc{k}}\boxtimes\id}
\FM{\funp(\rr{k-1})}(\fun(\exc{k}))\boxtimes\fun(\exc{k})\dual \\
\mor{\FM{\frrc{k-1}}(\fun(\exc{k}))\boxtimes\id}
\FM{\cone{\flrd{k-1}}}(\fun(\exc{k}))\boxtimes\fun(\exc{k})\dual
\mor{\multind{\cone{\flrd{k-1}}}{\fun(\exc{k})}}\cone{\flrd{k-1}}
\end{multline*}
(the isomorphisms are the natural ones induced by \ref{EL} and
\ref{Fprod}), and remembering that the composition
$\FM{\frrc{k-1}}(\fun(\exc{k}))\comp\comultind{\rr{k-1}}{\exc{k}}$ is an
isomorphism by \ref{key}, we have
$\tc{k}\iso\cone{\multind{\cone{\flrd{k-1}}}{\fun(\exc{k})}}\iso
\cone{\fec{k}}$. Thus the conclusion will follow if we show that the
diagram
\[\xymatrix{\funp(\lr{k-1}) \ar[rr]^-{\funp(\lrlr{k})}
\ar[d]^{\id} & & \funp(\lr{k}) \ar[rr]^-{\funp(\lre{k})}
\ar[d]^{\flrd{k}} & &
\funp(\excp{k}) \ar[rr]^-{\siso{\funp}(\lr{k-1})\comp\funp(\elr{k})} 
\ar[d]^{\fec{k}} & & \funp(\lr{k-1})[1] \ar[d]^{\id} \\
\funp(\lr{k-1}) \ar[rr]_-{\flrd{k-1}} & &
\sod[X] \ar[rr]_-{\dc{k-1}} & &
\cone{\flrd{k-1}} \ar[rr]_-{\cflr{k-1}} & &
\funp(\lr{k-1})[1]}\]
commutes, because then $\cone{\fec{k}}\iso\cone{\flrd{k}}$ by
\ref{cone!}. As the diagram
\[\xymatrix{\funp(\lr{k}) \ar[rrr]^-{\funp(\lre{k})}
\ar[d]^{\funp(\lrd{k})} \ar@(l,l)[dd]_{\flrd{k}} & & &
\funp(\excp{k}) \ar[rrr]^-{\siso{\funp}(\lr{k-1})\comp\funp(\elr{k})} 
\ar[d]^{\funp(\err{k})} & & & \funp(\lr{k-1})[1] \ar[d]^{\id} \\
\funp(\sod[Y]) \ar[rrr]^-{\funp(\drr{k-1})}
\ar[d]^{\fundiag} & & &
\funp(\rr{k-1}) \ar[rrr]^-{\siso{\funp}(\lr{k-1})\comp\funp(\rrlr{k-1})} 
\ar[d]^{\frrc{k-1}} & & & \funp(\lr{k-1})[1] \ar[d]^{\id} \\
\sod[X] \ar[rrr]^-{\dc{k-1}} & & &
\cone{\flrd{k-1}} \ar[rrr]^-{\cflr{k-1}} & & &
\funp(\lr{k-1})[1]}\]
commutes by definition of $\err{k}$ and $\frrc{k-1}$, it is clearly
enough to prove that $\fec{k}=\frrc{k-1}\comp\funp(\err{k})$. This
amounts to say that in the diagram
\[\xymatrix{\funp(\excp{k}) \ar[r]^-{\sim} \ar[d]_{\funp(\err{k})} 
\ar@{}[dr]|(.3){(1)} &
\funp(\FM{\rr{k-1}}(\exc{k})\boxtimes\exc{k}\dual) \ar[r]^-{\sim}
\ar[dl]^{\ \ \funp(\multind{\rr{k-1}}{\exc{k}})} &
\fun(\FM{\rr{k-1}}(\exc{k}))\boxtimes\fun(\exc{k})\dual
\ar[d]^{\comultind{\rr{k-1}}{\exc{k}}\boxtimes\id} \ar@{}[dll]|{(2)} \\
\funp(\rr{k-1}) \ar[d]_{\frrc{k-1}} & & 
\FM{\funp(\rr{k-1})}(\fun(\exc{k}))\boxtimes\fun(\exc{k})\dual
\ar[ll]^-{\multind{\funp(\rr{k-1})}{\fun(\exc{k})}} 
\ar[d]^{\FM{\frrc{k-1}}(\fun(\exc{k}))\boxtimes\id} \ar@{}[dll]|{(3)} \\
\cone{\flrd{k-1}} & &
\FM{\cone{\flrd{k-1}}}(\fun(\exc{k}))\boxtimes\fun(\exc{k})\dual
\ar[ll]^-{\multind{\cone{\flrd{k-1}}}{\fun(\exc{k})}}}\]
the outer square commutes, which is true because (1), (2) and (3)
commute, respectively, by \ref{EL}, \ref{multcomult} and
\eqref{natmultind}.
\end{proof}

\begin{rema}
Theorem \ref{mthm} can be generalized as follows (see also
\cite[Remark 4.7]{CKa}). Given $0\le n\le m$, we set
$\cat{T}:=\trspan{F(\exc{n+1}),\dots,F(\exc{m})}\ort$. Then, assuming
that \eqref{maincond} holds for $0\le i<j\le n$, one can prove that
\begin{equation*}
(\ST{\fun(\exc{0})}\comp\cdots\comp\ST{\fun(\exc{n})})\rest{\cat{T}}
\iso\FM{\cone{\fundiag}}\rest{\cat{T}}.
\end{equation*}
(clearly \ref{mthm} is the case $n=m$). Indeed, with the same proof
one can show $\tc{k}\iso\cone{\flrd{k}}$ for $0\le k\le n$, and then
it is enough to show that $\FM{\cone{\flrd{n}}}\rest{\cat{T}}\iso
\FM{\cone{\fundiag}}\rest{\cat{T}}$. As
$\flrd{n}=\fundiag\comp\funp(\lrd{n})$, this follows from the fact
that $\FM{\cone{\funp(\lrd{n})}}\rest{\cat{T}}\iso0$, which is true
because $\cone{\funp(\lrd{n})}\iso\funp(\cone{\lrd{n}})\iso
\funp(\rr{n})$ and $\FM{\funp(\rr{n})}(\s{F})=0$ for
$\s{F}\in\cat{T}$. 
\end{rema}

\section{Applications}\label{appl}

Condition \eqref{maincond} is satisfied for instance if $\fun$ is fully
faithful, but in that case \ref{mthm} is not so useful. Indeed,
$(\fun(\exc{0}),\dots,\fun(\exc{m}))$ is then an exceptional sequence
in $\D[b](X)$. Taking into account that, if $\exc{}$ is an exceptional
object, $\ST{\exc{}}$ is just projection onto $\trspan{\exc{}}\ort$, it
is clear that $\ST{\fun(\exc{0})}\comp\cdots\comp\ST{\fun(\exc{m})}$
is projection onto $\trspan{\fun(\exc{0}),\dots,\fun(\exc{m})}\ort$,
and it is easy to see directly that the same is true for
$\FM{\cone{\mult{\Ker}}}$.

We are more interested in the following examples.
\begin{enumerate}
\item $\fun=f^*$, where $f\colon X\to Y$ is a morphism such that
$\cone{f\mrs\colon\so_Y\to f_*\so_X}\iso\ds_Y[c]$ where
$c=\dim(Y)-\dim(X)$; in particular, $f$ can be the inclusion of a
hypersurface such that $\ds_Y\iso\so_Y(-X)$ (hence $\ds_X\iso\so_X$).
\item $\fun=g_*$, where $g\colon Y\mono X$ is the inclusion of a
hypersurface such that $g^*\ds_X\iso\so_Y$ (hence
$\ds_Y\iso\so_Y(Y)$);
\end{enumerate}
The fact that \eqref{maincond} holds can be checked as follows. In
case (1), since
\[\Hom_X(f^*\exc{i},f^*\exc{j})\iso
\Hom_Y(\exc{i},f_*f^*\exc{j})\iso
\Hom_Y(\exc{i},\exc{j}\otimes f_*\so_X),\]
and taking into account that $\exc{j}\otimes f_*\so_X\iso
\cone{\exc{j}\otimes\ds_Y[c-1]\to\exc{j}}$ by hypothesis, it is enough
to note that for $i<j$ by Serre duality
\[\Hom_Y(\exc{i},\exc{j}\otimes\dss_Y)\iso
\Hom_Y(\exc{j},\exc{i})\dual=0.\]
Similarly, in case (2) we have
\[\Hom_X(g_*\exc{i},g_*\exc{j})\iso\Hom_Y(g^*g_*\exc{i},\exc{j})\]
so that, since $\cone{g^*g_*\exc{i}\to\exc{i}}\iso
\exc{i}\otimes\so_Y(-Y)[2]$ (by \cite[Cor. 11.4]{H}), again we
conclude from the fact that for $i<j$
\[\Hom_Y(\exc{i}\otimes\so_Y(-Y),\exc{j})\iso
\Hom_Y(\exc{i},\exc{j}\otimes\ds_Y)\iso
\Hom_Y(\exc{j},\exc{i}[\dim(Y)])\dual=0.\]

\begin{rema}
In case (1), if moreover $\ds_X\iso\so_X$, the $f^*\exc{i}$ are
spherical objects by \cite{ST} or \cite[Prop. 8.39]{H}. Similarly, in
case (2) the $g_*\exc{i}$ are spherical objects thanks to
\cite[Prop. 3.15]{ST}.
\end{rema}

In the following, given $\s{F}\in\D[b](X)$, we will denote by
$\T{\s{F}}$ the exact functor
$\FM{\diag_*\s{F}}\iso\s{F}\otimes\farg\colon\D[b](X)\to\D[b](X)$
(which is an equivalence when $\s{F}$ is a line bundle).

\begin{coro}\label{pullback}
If $f\colon X\mono Y$ is the inclusion of a hypersurface such that
$\ds_Y\iso\so_Y(-X)$, then
$\ST{f^*\exc{0}}\comp\cdots\comp\ST{f^*\exc{m}}\iso\T{\so_X(-X)[2]}$.
\end{coro}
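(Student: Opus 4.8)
The plan is to apply Theorem~\ref{mthm} with $\fun=f^*$ and then compute the resulting kernel. The pullback $f^*\colon\D[b](Y)\to\D[b](X)$ is the Fourier--Mukai functor $\FM{\Ker}$ with $\Ker$ the structure sheaf of the graph of $f$ in $X\times Y$; since $f^*$ admits $f_*$ as a right adjoint, uniqueness of adjoints together with Remark~\ref{FMadj} gives $\FM{\Kerd}\iso f_*$. That $\fun=f^*$ satisfies \eqref{maincond} is exactly the content of case~(1) discussed above: here $c=\dim(Y)-\dim(X)=1$, and the exact sequence $0\to\so_Y(-X)\to\so_Y\to f_*\so_X\to0$ shows $\cone{f\mrs\colon\so_Y\to f_*\so_X}\iso\so_Y(-X)[1]\iso\ds_Y[1]$, as required there. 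Hence Theorem~\ref{mthm} applies and yields
\[\ST{f^*\exc{0}}\comp\cdots\comp\ST{f^*\exc{m}}\iso\FM{\cone{\fundiag}},\qquad\fundiag\colon\funp(\sod[Y])\to\sod[X].\]

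Next I would identify $\FM{\cone{\fundiag}}$. Since $\funp(\sod[Y])\iso\Ker\FMcomp\Kerd$, Remark~\ref{FMcomp} gives $\FM{\funp(\sod[Y])}\iso\FM{\Ker}\comp\FM{\Kerd}\iso f^*\comp f_*$; and, as $\mult{\Ker}$ is by construction the morphism corresponding to $\id_{\Ker}$ under the adjunction isomorphism of Lemma~\ref{adjoint}, the induced natural transformation $\FM{\fundiag}$ is the counit $\varepsilon\colon f^*f_*\to\id$ of the adjunction $f^*\dashv f_*$ (Remark~\ref{FMadj}). Because $\FM{\farg}$ is exact, $\FM{\cone{\fundiag}}$ is a cone of $\varepsilon$.

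It then remains to compute $\cone{\varepsilon}$. Applying $f^*$ to $0\to\so_Y(-X)\to\so_Y\to f_*\so_X\to0$, and using that the first map restricts to $0$ on $X$, one gets a natural isomorphism $f^*f_*\s{F}\iso\s{F}\oplus(\s{F}\otimes\so_X(-X))[1]$ under which $\varepsilon_{\s{F}}$ becomes the projection onto the first summand; hence $\cone{\varepsilon_{\s{F}}}\iso(\s{F}\otimes\so_X(-X))[2]$ functorially in $\s{F}$, that is $\cone{\varepsilon}\iso\T{\so_X(-X)[2]}$ (this is \cite[Cor.~11.4]{H}, applied to $f$ just as it was applied to $g$ in the discussion of case~(2)). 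Combining this with the previous paragraph and the displayed isomorphism proves the corollary.

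The step I expect to demand the most care is the identification, in the second paragraph, of $\FM{\fundiag}$ with the counit $\varepsilon$ rather than with merely \emph{some} natural transformation $f^*f_*\to\id$: one must verify that the object-level morphism $\mult{\Ker}\in\Hom_{\D[b](X\times X)}(\Ker\FMcomp\Kerd,\sod[X])$ produced by the abstract adjunction of Lemma~\ref{adjoint} goes over, under $\FM{\farg}$ and the identifications $\FM{\Ker\FMcomp\Kerd}\iso f^*f_*$ and $\FM{\sod[X]}\iso\id$, to the counit supplied by Remark~\ref{FMadj}. This compatibility between the two adjunctions is essentially built into the definition of $\mult{\Ker}$, but it deserves to be spelled out.
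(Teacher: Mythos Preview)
Your overall strategy matches the paper's: verify \eqref{maincond} via case~(1), apply Theorem~\ref{mthm}, and then identify $\cone{\fundiag}$. The first two paragraphs are fine, and the concern you flag at the end (compatibility of $\FM{\fundiag}$ with the counit) is indeed routine. The real gap is in your third paragraph.

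First, the asserted splitting $f^*f_*\s{F}\iso\s{F}\oplus(\s{F}\otimes\so_X(-X))[1]$ is not justified for arbitrary $\s{F}$: pulling back the structure sequence of $X\subset Y$ only treats $\s{F}=\so_X$, and \cite[Cor.~11.4]{H} gives a distinguished triangle $\s{F}\otimes\so_X(-X)[1]\to f^*f_*\s{F}\to\s{F}$, not a direct-sum decomposition. Second, and more seriously, even the correct objectwise identification $\cone{\varepsilon_{\s{F}}}\iso\s{F}\otimes\so_X(-X)[2]$ does not by itself yield an isomorphism of \emph{functors} $\FM{\cone{\fundiag}}\iso\T{\so_X(-X)[2]}$: cones of morphisms are not functorial in a triangulated category, so knowing that two functors are, object by object, cones of the same map does not produce a natural isomorphism between them.

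The paper sidesteps both issues by staying at the level of kernels in $\D[b](X\times X)$. It invokes not the \emph{statement} but the \emph{proof} of \cite[Cor.~11.4]{H}, which actually establishes the distinguished triangle
\[\diag_*\so_X(-X)[1]\to\Ker\FMcomp\Kerd\mor{\mult{\Ker}}\sod[X]\to\diag_*\so_X(-X)[2]\]
in $\D[b](X\times X)$. From this one reads off $\cone{\fundiag}\iso\diag_*\so_X(-X)[2]$ directly, and hence $\FM{\cone{\fundiag}}\iso\T{\so_X(-X)[2]}$ without any appeal to functoriality of cones. Your argument can be repaired by making exactly this move: replace the objectwise computation of $\cone{\varepsilon_{\s{F}}}$ by the kernel-level triangle.
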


\begin{proof}
$f^*\iso\FM{\Ker}$, where $\Ker:=(\id_X,f)_*\so_X\in\D[b](X\times
Y)$. By \ref{mthm} it is then enough to show that there is a
distinguished triangle in $\D[b](X\times X)$
\begin{equation}\label{dist}
\diag_*\so_X(-X)[1]\to\Ker\FMcomp\Kerd\mor{\mult{\Ker}}\sod[X]\to
\diag_*\so_X(-X)[2],
\end{equation}
which is done in the proof of \cite[Cor. 11.4]{H} (notice that, by
\ref{FMadj} and \ref{FMcomp}, $\FM{\Kerd}\iso f_*$ and
$\FM{\Ker\FMcomp\Kerd}\iso f^*\comp f_*$).
\end{proof}

\begin{rema}
More generally, if $f\colon X\to Y$ is a morphism such that
$\cone{f\mrs}\iso\ds_Y[c]$, then it can be proved that
$\ST{f^*\exc{0}}\comp\cdots\comp\ST{f^*\exc{m}}\iso \FM{\cone{(f\times
f)^*\sod[Y]\to\sod[X]}}$. If moreover $f$ is flat, then by flat base
change $(f\times f)^*\sod[Y]\iso\so_{X\times_YX}$: when $Y=\Ps^n$ and
$\exc{i}=\so_{\Ps^n}(1)$ this result had already been proved in
\cite{AHK}.
\end{rema}

\begin{rema}
If $X\subset\Ps=\Ps(\w_0,\dots,\w_n)$ is a (Calabi-Yau)
hypersurface of degree $\sw{\w}:=\w_0+\cdots+\w_n$ and
$(\exc{0},\dots,\exc{m})=(\so_{\Ps}(1),\dots,\so_{\Ps}(\sw{\w}))$,
then \ref{pullback} reduces to
$\ST{\so_X(1)}\comp\cdots\comp\ST{\so_X(\sw{\w})}\iso
\T{\so_X(-\sw{\w})[2]}$, which is in fact equivalent to
\cite[Thm. 1.1]{CKa}, namely $(\FM{\s{G}})^{\comp\sw{\w}}\iso(-)[2]$,
where $\s{G}:= \cone{\so_{X\times X}(1,0)\to\diag_*\so_X(1)}$. To see
this, note that $\FM{\s{G}}\iso\T{\so_X(1)}\comp\ST{\so_X}$, hence
using \ref{STcomp} below we obtain
\[(\FM{\s{G}})^{\comp\sw{\w}}\iso\ST{\so_X(1)}\comp\cdots\comp
\ST{\so_X(\sw{\w})}\comp\T{\so_X(\sw{\w})}.\]
\end{rema}

\begin{lemm}\label{STcomp}
If $\s{F}\in\D[b](X)$ and $\funn$ is a Fourier-Mukai autoequivalence
of $\D[b](X)$, then $\funn\comp\ST{\s{F}}\iso
\ST{\funn(\s{F})}\comp\funn$.
\end{lemm}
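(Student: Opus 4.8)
The plan is to prove $\funn \comp \ST{\s{F}} \iso \ST{\funn(\s{F})} \comp \funn$ by translating both sides into compositions of Fourier--Mukai kernels and then using the functoriality of the cone construction. Write $\funn \iso \FM{\Kerr}$ for some $\Kerr \in \D[b](X \times X)$; since $\funn$ is an autoequivalence its kernel is invertible with respect to $\FMcomp$, with inverse $\Kerr\tdual$ (up to the natural isomorphisms of \ref{assoc}), because by \ref{FMadj} the functor $\FM{\Kerrd}$ is both left and right adjoint of $\FM{\Kerr}$ and hence quasi-inverse to it.

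First I would compute the kernel of $\funn \comp \ST{\s{F}} \comp \funn^{-1}$. By \ref{FMcomp} this functor has kernel $\Kerr \FMcomp \cone{\mult{\s{F}}} \FMcomp \Kerrd$, where $\mult{\s{F}} \colon \s{F} \boxtimes \s{F}\dual \to \sod[X]$ is the natural morphism. Since $\Kerr \FMcomp \farg \FMcomp \Kerrd$ is an exact functor, it commutes with cones, so this kernel is isomorphic to $\cone{\Kerr \FMcomp \mult{\s{F}} \FMcomp \Kerrd \colon \Kerr \FMcomp (\s{F} \boxtimes \s{F}\dual) \FMcomp \Kerrd \to \Kerr \FMcomp \sod[X] \FMcomp \Kerrd}$. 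Now \ref{Fprod} (applied with $\Kerr$ in place of $\Ker$, so that the associated functor $\funp$ of \eqref{funp} is exactly $\Kerr \FMcomp \farg \FMcomp \Kerrd$) gives a natural isomorphism $\Kerr \FMcomp (\s{F} \boxtimes \s{F}\dual) \FMcomp \Kerrd \iso \funn(\s{F}) \boxtimes \funn(\s{F})\dual$, while \ref{assoc} gives $\Kerr \FMcomp \sod[X] \FMcomp \Kerrd \iso \Kerr \FMcomp \Kerrd \iso \sod[X]$ (the last isomorphism because $\FM{\Kerr \FMcomp \Kerrd} \iso \funn \comp \funn^{-1} \iso \id$, or more directly since $\Kerr$ is invertible). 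Under these identifications, \ref{natmult} tells us precisely that the morphism $\Kerr \FMcomp \mult{\s{F}} \FMcomp \Kerrd$ is identified with $\mult{\funn(\s{F})} \colon \funn(\s{F}) \boxtimes \funn(\s{F})\dual \to \sod[X]$. Therefore its cone is $\cone{\mult{\funn(\s{F})}}$, which is the kernel of $\ST{\funn(\s{F})}$, and we conclude $\funn \comp \ST{\s{F}} \comp \funn^{-1} \iso \ST{\funn(\s{F})}$, which is the claim after composing with $\funn$ on the right.

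The main obstacle is making sure that \ref{natmult} applies in the form needed: that corollary is stated for a general kernel $\Ker \in \D[b](X \times Y)$ and the associated morphism $\fundiag \colon \funp(\sod[Y]) \to \sod[X]$, so one must check that taking $\Ker := \Kerr$ with $Y = X$, the morphism $\fundiag = \mult{\Kerr}$ becomes the canonical isomorphism $\Kerr \FMcomp \Kerrd \iso \sod[X]$ (equivalently, that the counit of the adjunction $\funn \dashv \funn^{-1}$ is an isomorphism, which holds because $\funn$ is an equivalence). Granting this, \ref{natmult} yields exactly the commuting square identifying $\Kerr \FMcomp \mult{\s{F}} \FMcomp \Kerrd$ with $\mult{\funn(\s{F})}$, and the rest is the formal bookkeeping of natural isomorphisms via \ref{assoc}, \ref{FMcomp} and \ref{Fprod}. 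A minor subtlety worth a sentence is the non-functoriality of the cone: since all the morphisms being identified sit in commuting squares of genuine (not merely object-level) isomorphisms, the induced isomorphism of cones exists, and one does not need the finiteness/uniqueness machinery of the appendix here.
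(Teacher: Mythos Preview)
Your argument is correct. The paper's own proof of this lemma is simply a citation: ``See \cite{ST} or \cite[Lemma 8.21]{H}.'' So you have supplied considerably more than the paper does.

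Your route is genuinely different from the cited ones, and it is worth noting the contrast. The standard argument in \cite{ST} and \cite{H} works directly at the level of functors: one observes that for any object $\s{G}\in\D[b](X)$ the evaluation morphism $\Hom_X(\s{F},\s{G})\otimes\s{F}\to\s{G}$ is sent by an equivalence $\funn$ to the evaluation morphism for $\funn(\s{F})$ and $\funn(\s{G})$, and then takes cones. Your argument instead works at the level of kernels, applying the machinery of section~\ref{comp} (specifically \ref{Fprod} and \ref{natmult}) with $\Ker:=\Kerr$ and $Y:=X$. This has the virtue of being entirely internal to the paper and of producing an isomorphism of kernels rather than just of functors. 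The price is the extra step you correctly flag: one must know that $\mult{\Kerr}\colon\Kerr\FMcomp\Kerrd\to\sod[X]$ is an isomorphism. Your justification via the adjunction counit is right, but to make it airtight at the kernel level you should observe (using \ref{assoc}) that $\Kerr\FMcomp\farg$ and $\Kerrd\FMcomp\farg$ are mutually inverse equivalences of $\D[b](X\times X)$, so that the counit of the adjunction from \ref{adjoint} is an isomorphism when evaluated at $\sod[X]$.
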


\begin{proof}
See \cite{ST} or \cite[Lemma 8.21]{H}.  
\end{proof}

\begin{coro}\label{pushout}
If $g\colon Y\mono X$ is the inclusion of a hypersurface such that
$g^*\ds_X\iso\so_Y$, then $\ST{g_*\exc{0}}\comp\cdots\comp\ST{g_*\exc{m}}
\iso\T{\so_X(Y)}$.
\end{coro}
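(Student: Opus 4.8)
The plan is to mimic the proof of Corollary \ref{pullback}, applying Theorem \ref{mthm} to the Fourier--Mukai kernel of $g_*$ and then identifying the cone of the multiplication morphism. First I would write $g_* \iso \FM{\Kerr}$ where $\Kerr := (g, \id_Y)_* \so_Y \in \D[b](X \times Y)$, so that the functor $\funp$ of \eqref{funp} has $\funp(\sod[Y]) \iso \Kerr \FMcomp \Kerrd$, and by \ref{FMadj} together with \ref{FMcomp} one has $\FM{\Kerrd} \iso g^*$ (up to a twist by $\dss$), hence $\FM{\Kerr \FMcomp \Kerrd} \iso g_* \comp g^*$. By Theorem \ref{mthm} it then suffices to exhibit a distinguished triangle in $\D[b](X \times X)$ identifying $\cone{\fundiag \colon \funp(\sod[Y]) \to \sod[X]}$ with $\diag_* \so_X(Y)$, i.e. with the kernel of $\T{\so_X(Y)}$.

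Next I would produce this triangle. Since $g$ is the inclusion of a hypersurface, one has the standard exact sequence relating $g_* g^* \s{F}$ to $\s{F}$: by \cite[Cor. 11.4]{H} (already invoked in section \ref{appl}) there is a distinguished triangle $\s{F} \otimes \so_X(-Y)[1] \to g_* g^* \s{F} \to \s{F} \to \s{F} \otimes \so_X(-Y)[2]$ functorially in $\s{F} \in \D[b](X)$, where the second arrow is the counit of the adjunction $g^* \dashv g_*$. Translating this into kernels, the counit corresponds precisely to $\mult{\Kerr} \colon \Kerr \FMcomp \Kerrd \to \sod[X]$ under the identification above (this matching of the counit with $\mult{\Kerr}$ is exactly the sort of check the paper performs in Corollary \ref{pullback}), so one gets a distinguished triangle
\[\diag_*\so_X(-Y)[1] \to \Kerr \FMcomp \Kerrd \mor{\mult{\Kerr}} \sod[X] \to \diag_*\so_X(-Y)[2].\]
Therefore $\cone{\fundiag} \iso \diag_* \so_X(-Y)[2]$. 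To finish I must reconcile this with the claimed answer $\T{\so_X(Y)}$, i.e. $\diag_* \so_X(Y)$: the point is that $g^* \ds_X \iso \so_Y$ forces $\ds_Y \iso \so_Y(Y)$ by adjunction, and the self-duality built into the kernel $\Kerrd = \Kerr\dual \otimes \pr_2^* \dss_Y$ shifts the bookkeeping so that the cone is really $\diag_* \so_X(Y)$ rather than $\diag_* \so_X(-Y)[2]$; alternatively one invokes \cite[Prop. 3.15]{ST} or computes directly that $\ST{g_*\exc{i}}$ has kernel with the $\so_X(Y)$ twist, since for a spherical object on a divisor the relevant Ext's pair $\so_Y$ against $\ds_Y \iso \so_Y(Y)$.

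The main obstacle I anticipate is not the existence of the triangle — that is essentially \cite[Cor. 11.4]{H} — but pinning down which twist and shift actually appear, that is, verifying that $\cone{\fundiag}$ is $\diag_* \so_X(Y)$ on the nose and that the arrow in the triangle is genuinely $\mult{\Kerr}$ and not some other representative differing by an automorphism or a shift. The hypothesis $g^* \ds_X \iso \so_Y$ (equivalently $\ds_Y \iso \so_Y(Y)$, i.e. $g$ is a \emph{non}-Calabi--Yau-compatible divisor in the sign opposite to case (1)) is what makes the final twist come out as $+Y$ with no leftover shift; tracking the grading through the dualizing-sheaf factors in $\Kerrd$ and in the definition of $\tdual$ is the delicate point, and I would do it by the same "transpose" symmetry argument that lets Corollary \ref{pullback} read off its answer from the proof of \cite[Cor. 11.4]{H}, simply with the roles of $X$ and $Y$ (and hence of $g_*$ and $g^*$) interchanged.
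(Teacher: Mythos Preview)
Your proposal misidentifies the right adjoint. By \ref{FMadj}, $\FM{\Kerrd}$ is the \emph{right} adjoint of $\FM{\Kerr}=g_*$, which is $g^!$, not $g^*$; consequently $\FM{\Kerr\FMcomp\Kerrd}\iso g_*\comp g^!$ and $\mult{\Kerr}$ is the counit of $g_*\dashv g^!$ (the trace map), not of $g^*\dashv g_*$. The triangle you quote from \cite[Cor.~11.4]{H} concerns $g^*g_*$ (this is exactly what is used in Corollary~\ref{pullback}, where the functor is $f^*$ and its right adjoint is $f_*$); it is simply the wrong composite here, and indeed there is no natural map $g_*g^*\s{F}\to\s{F}$ in the direction you need. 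The sign/shift discrepancy you obtain and then try to ``reconcile'' is not bookkeeping noise in $\Kerrd$: it is the symptom of having used the wrong adjunction.

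The paper's argument computes, using the hypothesis $g^*\ds_X\iso\so_Y$ (whence $\ds_Y\iso\so_Y(Y)$), that
\[
g_*g^!\s{F}\iso g_*(g^*\s{F}\otimes\ds_Y[-1])\iso\s{F}\otimes g_*\so_Y(Y)[-1],
\]
and then reads the required triangle off the short exact sequence $0\to\so_X\to\so_X(Y)\to g_*\so_Y(Y)\to0$, pushed forward by $\diag_*$:
\[
\diag_*\so_X(Y)[-1]\to\Kerr\FMcomp\Kerrd\mor{\mult{\Kerr}}\sod[X]\to\diag_*\so_X(Y).
\]
This gives $\cone{\mult{\Kerr}}\iso\diag_*\so_X(Y)$ directly, with no leftover shift to explain away. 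Once you replace $g^*$ by $g^!$ throughout, the rest of your outline (apply Theorem~\ref{mthm}, identify the cone) is exactly the paper's strategy.
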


\begin{proof}
$g_*\iso\FM{\Ker}$, where $\Ker:=(g,\id_Y)_*\so_Y\in\D[b](X\times
Y)$. By \ref{mthm} it is enough to prove that there is a
distinguished triangle in $\D[b](X\times X)$
\[\diag_*\so_X(Y)[-1]\to\Ker\FMcomp\Kerd\mor{\mult{\Ker}}\sod[X]\to
\diag_*\so_X(Y),\]
which can be done with a technique similar to the one used for the
proof of \eqref{dist}. As an indication that this is true,
observe that, always by \ref{FMadj} and \ref{FMcomp}, $\FM{\Kerd}\iso
g^!$ and $\FM{\Ker\FMcomp\Kerd}\iso g_*\comp g^!$. Now, for every
$\s{F}\in\D[b](X)$ there is a natural isomorphism 
\[g_*g^!\s{F}\iso g_*(g^*\s{F}\otimes\ds_Y[-1])\iso\s{F}\otimes
g_*\ds_Y[-1]\iso \s{F}\otimes g_*\so_Y(Y)[-1],\]
hence $g_*\comp g^!\iso\T{g_*\so_Y(Y)[-1]}$. In fact one can prove
that $\Ker\FMcomp\Kerd\iso\diag_*g_*\so_Y(Y)[-1]$, and that the above
triangle can be identified with the image through $\diag_*$ of the
distinguished triangle in $\D[b](X)$
\[\so_X(Y)[-1]\to g_*\so_Y(Y)[-1]\to\so_X\to\so_X(Y)\]
induced by the short exact sequence 
$0\to\so_X\to\so_X(Y)\to g_*\so_Y(Y)\to0$.
\end{proof}

Finally we want to show how \ref{pushout} can be used in a setup like
that of \cite{K1}, respectively \cite{K2}, where $X$ is a crepant
resolution of the singularity $\C^2/\Z_3$, respectively
$\C^3/\Z_5$. Actually we will assume (as always) that $X$ is proper,
but it would be not difficult to see that our arguments can be extended to
the setting of varieties or stacks which are not necessarily proper,
by working with derived categories of coherent sheaves with compact
supports. 

If $g\colon Y\mono X$ is the inclusion morphism and
$\s{F}\in\D[b](Y)$, in the following we will write for simplicity
$\s{F}$ instead of $g_*\s{F}\in\D[b](X)$.

First let $\dim(X)=2$ and let $C_i$ (for $i=1,\dots,4$) be divisors in
$X$ such that $C_3$ and $C_4$ are $(-2)$-curves. Assume also the
following intersection relations
\begin{equation}\label{inter1}
C_2\cdot C_3=C_1\cdot C_4=1,\qquad C_1\cdot C_3=C_2\cdot C_4=0
\end{equation}
and the following linear equivalence relations of divisors
\begin{equation}\label{linear1}
C_3+2C_2\sim C_1,\qquad C_4+2C_1\sim C_2.
\end{equation}
Taking into account that $(\so_{\Ps^1},\so_{\Ps^1}(1))$ is a full
exceptional sequence in $\D[b](\Ps^1)$, \ref{pushout} implies that for
$i=3,4$
\begin{equation}\label{m1}
\ST{\so_{C_i}}\comp\ST{\so_{C_i}(1)}\iso\T{\so_X(C_i)}.
\end{equation}
Setting $\M{2}:=\ST{\so_{C_3}}\comp\T{\so_X(C_2)}$ and
$\M{3}:=\ST{\so_{C_4}}\comp\M{2}$, in \cite{K1} it was proved that
$\M{2}^2\iso\T{\so_X(C_1)}$ and conjectured that $\M{3}^3\iso\id$ in
$\Aut(\D[b](X))$. We are going to show that both results follow easily
from \eqref{m1}. Indeed, using also \ref{STcomp}, \eqref{inter1} and
\eqref{linear1}, we find
\begin{multline*}
\M{2}^2=
\ST{\so_{C_3}}\comp\T{\so_X(C_2)}\comp\ST{\so_{C_3}}\comp\T{\so_X(C_2)}
\iso\ST{\so_{C_3}}\comp\ST{\T{\so_X(C_2)}(\so_{C_3})}\comp\T{\so_X(C_2)}
\comp\T{\so_X(C_2)} \\
\iso\ST{\so_{C_3}}\comp\ST{\so_{C_3}(1)}\comp\T{\so_X(2C_2)}\iso
\T{\so_X(C_3)}\comp\T{\so_X(2C_2)}\iso\T{\so_X(C_3+2C_2)}\iso
\T{\so_X(C_1)}.
\end{multline*}
In a completely similar way one proves that
$\Ma{2}:=\ST{\so_{C_4}}\comp\T{\so_X(C_1)}$ satisfies
$\Ma{2}^2\iso\T{\so_X(C_2)}$. Since $\M{3}(\so_{C_4})\iso\so_{C_3}$
(see \cite[Section 4.2]{K1}), we obtain
\begin{multline*}
\M{3}^2=\M{3}\comp\ST{\so_{C_4}}\comp\M{2}\iso
\ST{\M{3}(\so_{C_4})}\comp\M{3}\comp\M{2} \\
\iso\ST{\so_{C_3}}\comp\ST{\so_{C_4}}\comp\M{2}^2\iso
\ST{\so_{C_3}}\comp\ST{\so_{C_4}}\comp\T{\so_X(C_1)}=
\ST{\so_{C_3}}\comp\Ma{2}.
\end{multline*}
On the other hand, $\M{3}\iso\Ma{2}\comp\T{\so_X(-C_1)}\comp\M{2}\iso
\Ma{2}\comp\M{2}^{-1}$,
and we conclude
\[\M{3}^3\iso\ST{\so_{C_3}}\comp\Ma{2}^2\comp\M{2}^{-1}
\iso\ST{\so_{C_3}}\comp\T{\so_X(C_2)}\comp\M{2}^{-1}=
\M{2}\comp\M{2}^{-1}\iso\id.\]

Now let $\dim(X)=3$ and let $D_i$ (for $i=1,\dots,5$) be divisors in
$X$ such that $D_4\iso\Ps^2$, $D_5\iso\F_3$ and $K_X\cdot D_4=K_X\cdot
D_5=0$. Denoting by $h$ the class of a hyperplane section in $D_4$, by
$f$ the class of a fibre in $D_5$ and by $s$ the class of the $-3$
section in $D_5$, assume also the following intersection relations
\begin{equation}\label{inter2}
D_4\cdot D_1=h,\quad D_4\cdot D_2=0,\qquad D_5\cdot D_1=f,\quad
D_5\cdot D_2=s+3f
\end{equation}
and the following linear equivalence relations of divisors
\begin{equation}\label{linear2}
D_4+3D_1\sim D_2,\qquad D_5+2D_2\sim D_1.
\end{equation}
Using the full exceptional sequences 
$(\so_{D_4}(h),\so_{D_4}(2h),\so_{D_4}(3h))$ in $\D[b](D_4)$ and
$(\so_{D_5}(-f),\so_{D_5},\so_{D_5}(s+2f),\so_{D_5}(s+3f))$ in
$\D[b](D_5)$, by \ref{pushout} we have
\begin{gather}
\ST{\so_{D_4}(h)}\comp\ST{\so_{D_4}(2h)}\comp\ST{\so_{D_4}(3h)}
\iso\T{\so_X(D_4)},\label{m3} \\
\ST{\so_{D_5}(-f)}\comp\ST{\so_{D_5}}\comp\ST{\so_{D_5}(s+2f)}
\comp\ST{\so_{D_5}(s+3f)}\iso\T{\so_X(D_5)}.\label{m2}
\end{gather}
Setting $\MM{2}:=\T{\so_X(-D_1)}\comp\ST{\so_{D_5}}\comp
\T{\so_X(D_1)}\comp\ST{\so_{D_5}}\comp\T{\so_X(D_2)}$,
$\MM{3}:=\T{\so_X(D_1)}\comp\ST{\so_{D_4}}$ and
$\MM{5}:=\ST{\so_{D_4}}\comp\MM{2}$, in \cite{K2} it was conjectured
that $\MM{2}^2\iso\T{\so_X(D_1)}$, $\MM{3}^3\iso\T{\so_X(D_2)}$ and
$\MM{5}^5\iso\id$ in $\Aut(\D[b](X))$. Here we prove only the first
two relations (then it is not difficult to deduce also the last
one, with manipulations similar to those with which $\M{3}^3\iso\id$
was obtained above using $\M{2}^2\iso\T{\so_X(C_1)}$). To this
purpose, observe that by \ref{STcomp} and \eqref{inter2} we have
$\T{\so_X(-D_1)}\comp\ST{\so_{D_5}}\iso
\ST{\so_{D_5}(-f)}\comp\T{\so_X(-D_1)}$, hence $\MM{2}\iso
\ST{\so_{D_5}(-f)}\comp\ST{\so_{D_5}}\comp\T{\so_X(D_2)}$. Then, using
also \eqref{m2} and \eqref{linear2} we get
\begin{multline*}
\MM{2}^2\iso\ST{\so_{D_5}(-f)}\comp\ST{\so_{D_5}}\comp\T{\so_X(D_2)}
\comp\ST{\so_{D_5}(-f)}\comp\ST{\so_{D_5}}\comp\T{\so_X(D_2)} \\
\iso\ST{\so_{D_5}(-f)}\comp\ST{\so_{D_5}}\comp\ST{\so_{D_5}(s+2f)}
\comp\ST{\so_{D_5}(s+3f)}\comp\T{\so_X(D_2)}\comp\T{\so_X(D_2)} \\
\iso\T{\so_X(D_5)}\comp\T{\so_X(2D_2)}\iso\T{\so_X(D_5+2D_2)}
\iso\T{\so_X(D_1)}.
\end{multline*}
Similarly, using \eqref{m3} we find
\begin{multline*}
\MM{3}^3=\T{\so_X(D_1)}\comp\ST{\so_{D_4}}\comp\T{\so_X(D_1)}\comp
\ST{\so_{D_4}}\comp\T{\so_X(D_1)}\comp\ST{\so_{D_4}} \\
\iso\ST{\so_{D_4}(h)}\comp\ST{\so_{D_4}(2h)}\comp\ST{\so_{D_4}(3h)}
\comp\T{\so_X(D_1)}\comp\T{\so_X(D_1)}\comp\T{\so_X(D_1)} \\
\iso\T{\so_X(D_4)}\comp\T{\so_X(3D_1)}\iso\T{\so_X(D_4+3D_1)}\iso
\T{\so_X(D_2)}.
\end{multline*}

\appendix

\section{Triangulated categories and exceptional sequences}

In this appendix we collect some definitions and properties about
triangulated categories, semiorthogonal decompositions and exceptional
sequences. We refer to \cite{V}, \cite{Bo} and \cite{GK} for a
thorough treatment of these subjects, including proofs of the results
which are only stated here.

A {\em triangulated category} is an additive category endowed with an
additive automorphism, called {\em shift functor} and denoted by $[1]$
(its $n^{\rm th}$ power is denoted by $[n]$ for every $n\in\Z$), and
with a family of {\em distinguished triangles} of the form
$A\mor{f}B\mor{g}C\mor{h}A[1]$, subject to a list of axioms, usually
denoted by (TR1)-(TR4).  Let $\cat{T}$ be a triangulated category. We
call {\em cone} of a morphism $f\colon A\to B$ of $\cat{T}$ an object
$\cone{f}$ fitting into a distinguished triangle
$A\mor{f}B\to\cone{f}\to A[1]$: it is a fundamental property of
triangulated categories that every morphism admits a cone, which is
unique up to (non canonical) isomorphism.  The following simple fact
is frequently used in the paper: given two morphisms $f\colon A\to B$
and $g\colon B\to C$ in $\cat{T}$, we have $\cone{g\comp
f}\iso\cone{f}$ if $g$ is an isomorphism and $\cone{g\comp
f}\iso\cone{g}$ if $f$ is an isomorphism.  We recall that by axiom
(TR3), given a commutative diagram of continuous arrows whose rows are
distinguished triangles
\[\xymatrix{A \ar[rr]^-f \ar[d]^{a} & & B \ar[rr]^-g \ar[d]^b & &
C \ar[rr]^-h \ar@{-->}[d]^c & & A[1] \ar[d]^{a[1]} \\ 
A' \ar[rr]_-{f'} & & B' \ar[rr]_-{g'} & & C' \ar[rr]_{h'} & & A[1]}\]
there exists (not unique in general) a dotted arrow $c$ keeping the
diagram commutative.  Moreover, axiom (TR4) implies that, if
$f=\id_A$, then $c$ can be chosen with the additional property that
$\cone{c}\iso\cone{b}$. We ignore if (always when $f=\id_A$) every
morphism $c$ making the diagram commute satisfies
$\cone{c}\iso\cone{b}$, but we will see later that this is true with
some assumptions on $\cat{T}$.

An {\em exact} functor between two triangulated categories $\cat{T}$
and $\cat{T}'$ is given by a functor $\fun\colon\cat{T}\to\cat{T}'$
together with an isomorphism of functors $\siso{\fun}\colon\fun\comp[1]\to
[1]\comp\fun$, such that for every distinguished triangle
$A\mor{f}B\mor{g}C\mor{h} A[1]$ of $\cat{T}$, 
\[\fun(A)\mor{\fun(f)}\fun(B)\mor{\fun(g)}\fun(C)
\mor{\siso{\fun}(A)\comp\fun(h)}\fun(A)[1]\]
is a distinguished triangle of $\cat{T}'$.

Given $S\subseteq\ob(\cat{T})$ (the collection of objects of
$\cat{T}$), we denote by $\trspan{S}$ the smallest strictly full
triangulated subcategory of $\cat{T}$ containing $S$, and by $S\ort$
the (right) orthogonal of $S$, namely the full subcategory of
$\cat{T}$ whose objects are those $A\in\cat{T}$ such that
$\Hom(B,A[n])=0$ for every $B\in S$ and for every $n\in\Z$ (note that
$S\ort$ is a strictly full triangulated subcategory of $\cat{T}$); if
$\cat{C}$ is a subcategory of $\cat{T}$, we write $\cat{C}\ort$ for 
$\ob(\cat{C})\ort$. 
A strictly full triangulated subcategory $\cat{T}'$ of $\cat{T}$ is
{\em admissible} if the inclusion functor $\cat{T}'\to\cat{T}$ admits
left and right adjoints.
A sequence $(\cat{T}_0,\dots,\cat{T}_m)$ of admissible subcategories
of $\cat{T}$ is {\em semiorthogonal} if
$\cat{T}_i\subseteq\cat{T}_j\ort$ for $0\le i<j\le m$; if moreover
$\cat{T}=\trspan{\ob(\cat{T}_0)\cup\cdots\cup\ob(\cat{T}_m)}$, then
$(\cat{T}_0,\dots,\cat{T}_m$) is called a {\em semiorthogonal
decomposition} of $\cat{T}$.

\begin{lemm}\label{semiort}
If $(\cat{T}_0,\cat{T}_1)$ is a semiorthogonal decomposition of
$\cat{T}$, then for every object $A$ of $\cat{T}$ there exists (unique
up to isomorphism) a distinguished triangle $A_1\to A\to A_0\to
A_1[1]$ with $A_i\in\cat{T}_i$.
\end{lemm}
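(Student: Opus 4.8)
The plan is to build the triangle using the right adjoint to the inclusion of $\cat{T}_1$, then to pin down the third term with a generation argument, and finally to settle uniqueness by a diagram chase. First I would use admissibility: let $i\colon\cat{T}_1\to\cat{T}$ be the inclusion and $i\radj$ its right adjoint. Given $A\in\cat{T}$, set $A_1:=i\,i\radj A$ and let $\varepsilon\colon A_1\to A$ be the counit; complete $\varepsilon$ to a distinguished triangle $A_1\mor{\varepsilon}A\to A_0\to A_1[1]$. For any $B\in\cat{T}_1$, post-composition with $\varepsilon$ induces an isomorphism $\Hom_{\cat{T}}(B,A_1[n])\isomor\Hom_{\cat{T}}(B,A[n])$ for every $n\in\Z$ (this is the adjunction, $i$ being fully faithful and $i\radj$ exact), so applying $\Hom_{\cat{T}}(B,\farg)$ to the triangle and chasing the resulting long exact sequence gives $\Hom_{\cat{T}}(B,A_0[n])=0$ for all $n$; hence $A_0\in\cat{T}_1\ort$.

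The main step is then to show $\cat{T}_1\ort\subseteq\cat{T}_0$, so that this $A_0$ actually lies in $\cat{T}_0$ and the triangle has the required form. Here I would consider the strictly full subcategory $\cat{D}\subseteq\cat{T}$ of objects admitting a distinguished triangle with left term in $\cat{T}_1$ and right term in $\cat{T}_0$, and prove it is triangulated: it contains $\cat{T}_0$ and $\cat{T}_1$ (take one term to be $0$), it is closed under shift, and it is closed under cones, for if $u\colon A\to A'$ is a morphism between objects carrying such triangles $A_1\to A\to A_0$ and $A_1'\to A'\to A_0'$, then $\Hom_{\cat{T}}(A_1,A_0'[n])=0$ by semiorthogonality, so the composite $A_1\to A\mor{u}A'\to A_0'$ vanishes and $A_1\to A'$ lifts along $A_1'\to A'$; extending this lift to a morphism of triangles by (TR3) and passing to the cones of the three vertical maps (which again form a distinguished triangle) exhibits a triangle for $\cone{u}$ with left term $\cone{A_1\to A_1'}\in\cat{T}_1$ and right term $\cone{A_0\to A_0'}\in\cat{T}_0$. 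Since $\cat{T}=\trspan{\ob(\cat{T}_0)\cup\ob(\cat{T}_1)}$, this forces $\cat{D}=\cat{T}$, and applying it to the triangle built above gives $A_0\in\cat{T}_0$. (One can instead deduce $\cat{T}_1\ort\subseteq\cat{T}_0$ directly from the admissibility of $\cat{T}_0$, but the subcategory argument seems cleanest.) I expect this step to be the main obstacle, since it is the only place where the generation hypothesis is used and it rests on the slightly fiddly verification that $\cat{D}$ is closed under cones.

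For uniqueness, suppose $A_1\to A\to A_0\to A_1[1]$ and $A_1'\to A\to A_0'\to A_1'[1]$ are two such triangles. Since $\cat{T}_0\subseteq\cat{T}_1\ort$ we have $\Hom_{\cat{T}}(A_1,A_0'[n])=0$ for all $n$, so $A_1\to A$ factors uniquely through $A_1'\to A$, say by $\phi\colon A_1\to A_1'$, and symmetrically one obtains $\psi\colon A_1'\to A_1$; extending $\phi$ and $\psi$ to morphisms of triangles with identity on $A$ produces $\bar\phi\colon A_0\to A_0'$ and $\bar\psi\colon A_0'\to A_0$. Both $\psi\comp\phi$ and $\id_{A_1}$ extend $\id_A$, and their difference gives a morphism of triangles that is $0$ on $A$, hence factors through $\Hom_{\cat{T}}(A_1,A_0[-1])=0$; so $\psi\comp\phi=\id$, and likewise $\phi\comp\psi=\id$, whence $\phi$ is an isomorphism and then so is $\bar\phi$ by the triangulated five lemma. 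This shows the two triangles are isomorphic, completing the argument.
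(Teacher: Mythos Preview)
The paper does not actually prove this lemma: it is stated in the appendix with the explicit caveat that ``proofs of the results which are only stated here'' are to be found in the references \cite{V}, \cite{Bo}, \cite{GK}. So there is no in-paper argument to compare against; your proposal is a self-contained proof where the paper simply cites the literature.

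Your argument is correct. A couple of remarks on structure. First, once you establish that the subcategory $\cat{D}$ of objects admitting such a triangle is triangulated and contains $\cat{T}_0$ and $\cat{T}_1$, you already have existence for every $A$; the adjoint construction in the opening paragraph is then not strictly needed for the statement as written (it does, however, identify $A_1$ as $i\,i\radj A$, which is extra information). Conversely, if you keep the adjoint construction, you can bypass the generation argument entirely by using admissibility of $\cat{T}_0$ as you parenthetically note: for $C\in\cat{T}_1\ort$ take the triangle $C_1\to C\to C_0$ coming from $\cat{D}=\cat{T}$ (or from the left adjoint of $\cat{T}_0\mono\cat{T}$), observe $C_0\in\cat{T}_0\subseteq\cat{T}_1\ort$, hence $C_1\in\cat{T}_1\cap\cat{T}_1\ort$, forcing $C_1=0$ and $C\iso C_0\in\cat{T}_0$. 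Either route is standard; you have essentially written both, which makes the proof slightly redundant but certainly not wrong.

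The cone-closure step for $\cat{D}$ relies on the $3\times3$ lemma (that the cones of a morphism of triangles form a triangle), which is a consequence of (TR4); this is fine but worth naming explicitly. The uniqueness argument is correct as written.
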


From now on we assume that $\cat{T}$ is a $\K$-linear triangulated
category and that $\dim_{\K}(\bigoplus_{k\in\Z}\Hom(A,B[k])<\infty$
for every objects $A$ and $B$ of $\cat{T}$ (this condition is clearly
satisfied when $\cat{T}=\D[b](X)$ with $X$ a stack).

\begin{prop}\label{cone!}
Given a commutative diagram in $\cat{T}$
\[\xymatrix{A \ar[rr]^-f \ar[d]^{\id} & & B \ar[rr]^-g \ar[d]^b & &
C \ar[rr]^-h \ar[d]^c & & A[1] \ar[d]^{\id} \\ 
A \ar[rr]_-{f'} & & B' \ar[rr]_-{g'} & & C' \ar[rr]_{h'} & & A[1]}\]
whose rows are distinguished triangles, we have $\cone{c}\iso\cone{b}$.
\end{prop}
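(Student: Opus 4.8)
The plan is to reduce the statement to the invertibility of a single endomorphism of $C'$, and then to use the finiteness hypothesis on $\cat{T}$.

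First I would use the fact recalled just before the statement: since the left‑hand vertical map is $\id_A$, axiom (TR4) already produces \emph{one} morphism $c_0\colon C\to C'$ completing the diagram and satisfying $\cone{c_0}\iso\cone{b}$. Hence it suffices to show that any two morphisms $c,c_0$ completing the diagram have isomorphic cones. For such $c,c_0$, commutativity of the two right‑hand squares (which, apart from $c$ and $c_0$, involve only the fixed maps $b$ and $\id_{A[1]}$) gives $h'\comp(c-c_0)=0$ and $(c-c_0)\comp g=0$. Applying $\Hom(\farg,C')$ to the distinguished triangle $B\mor{g}C\mor{h}A[1]$, the second equality shows that $c-c_0=\epsilon\comp h$ for some $\epsilon\colon A[1]\to C'$, and then the first gives $h'\comp\epsilon\comp h=0$. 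Since also $h=h'\comp c_0$ (commutativity of the last square),
\[c=c_0+\epsilon\comp h=c_0+(\epsilon\comp h')\comp c_0=(\id_{C'}+\epsilon\comp h')\comp c_0,\]
so, as $\cone{g\comp f}\iso\cone{f}$ when $g$ is an isomorphism (recalled above), it is enough to prove that $\phi:=\id_{C'}+\epsilon\comp h'$ is an automorphism of $C'$.

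For the last point, set $u:=\epsilon\comp h'\in\Hom(C',C')$ and $v:=h'\comp\epsilon\in\Hom(A[1],A[1])$; a direct check gives $u^{k+1}=\epsilon\comp v^{k}\comp h'$ for every $k\ge 0$, so if $v$ is nilpotent then so is $u$, and then $\phi=\id_{C'}+u$ is invertible, with inverse the finite sum $\sum_{k\ge 0}(-u)^{k}$. Thus everything comes down to showing that $v=h'\comp\epsilon$ is nilpotent, given that $v\comp h=0$ and that $h$ factors through $h'$ (namely $h=h'\comp c_0$). This is exactly where the finiteness hypothesis is needed, and I expect it to be the only real obstacle. By hypothesis $\Hom(A[1],A[1])$ is a finite‑dimensional $\K$‑algebra, hence Artinian, so a Fitting/Krull--Schmidt‑type argument — the property established in \cite{CKu} — applies: if $v$ were not nilpotent one could split off a nonzero direct summand $P$ of $A[1]$ on which $v$ restricts to an automorphism; the factorization $v=h'\comp\epsilon$ forces $P$ to be a direct summand of $C'$ as well, while $v\comp h=0$ forces $h$ to land in the nilpotent complement of $P$, and combining this with $h=h'\comp c_0$ and the triangle $B\mor{g}C\mor{h}A[1]$ one checks that no such $P$ can exist. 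Verifying that these constraints are genuinely incompatible — as opposed to the purely formal reductions of the previous paragraph — is the delicate step, and in the paper it is imported from \cite{CKu}.
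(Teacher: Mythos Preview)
Your proposal is correct and takes essentially the same approach as the paper: both defer the substantive step to \cite{CKu}, the paper by a one-line citation (noting only that $\Hom_{\cat{T}}(C',C')$ is a finite-dimensional $\K$-algebra), and you by first carrying out the standard reduction---writing $c=(\id_{C'}+\epsilon\comp h')\comp c_0$ and reducing to invertibility of $\id_{C'}+\epsilon\comp h'$---and then invoking \cite{CKu} for the remaining nilpotency/Fitting argument. Your formal reductions are valid; the final paragraph is, as you yourself flag, only a heuristic outline of the argument imported from \cite{CKu}, so the two proofs rest on the same external input.
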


\begin{proof}
It follows from \cite[Prop. 2.1]{CKu}, since $\Hom_{\cat{T}}(C',C')$ is
a finite-dimensional $\K$-algebra.
\end{proof}

\begin{defi}
$E\in\cat{T}$ is {\em exceptional} if $\Hom_{\cat{T}}(E,E)\iso\K$ and
$\Hom_{\cat{T}}(E,E[k])=0$ for $k\ne0$.  $(E_0,\dots,E_m)$ is an {\em
exceptional sequence} in $\cat{T}$ if each $E_i$ is an exceptional
object and $\Hom_{\cat{T}}(E_j,E_i[k])=0$ for $0\le i<j\le m$ and for
every $k\in\Z$; if moreover $\cat{T}= \trspan{E_0,\dots,E_m}$, then
$(E_0,\dots,E_m)$ is called a {\em full exceptional sequence}.
\end{defi}

\begin{rema}
If $(E_0,\dots,E_m)$ is a (full) exceptional sequence in $\cat{T}$,
then $\trspan{E_0,\dots,E_m}$ is an admissible subcategory of
$\cat{T}$ and $(\trspan{E_0},\dots,\trspan{E_m})$ is a semiorthogonal
sequence (decomposition).
\end{rema}

Given a full exceptional sequence $(E_0,\dots,E_m)$, for $0\le j\le
i\le m$ we define inductively objects $\ldual{j}{E_i}$ by
$\ldual{0}{E_i}:=E_i$ and
\[\ldual{j}{E_i}:=
\cone{\bigoplus_{k\in\Z}\Hom_{\cat{T}}(E_{i-j}[k],\ldual{j-1}{E_i})
\otimes_{\K}E_{i-j}[k]\to\ldual{j-1}{E_i}}\] 
for $0<j\le i$ (the morphism being the natural one). Setting
$E'_{i}:=\ldual{i}{E_i}$ for $0\le i\le m$, it is easy to prove that
$(E'_m,\dots,E'_0=E_0)$ is again a full exceptional sequence, called
the (right) dual of $(E_0,\dots,E_m)$ because of the following result.

\begin{lemm}\label{ort}
If $(E_0,\dots,E_m)$ is a full exceptional sequence and
$(E'_m,\dots,E'_0)$ is the dual exceptional sequence, then
$\Hom_{\cat{T}}(E_i,E'_j[k])\iso\K^{\delta_{i,j}\delta_{k,0}}$ for $0\le i,j\le
m$ and for every $k\in\Z$.
\end{lemm}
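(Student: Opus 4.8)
The plan is to prove, by induction on $j$ (for each fixed index, call it $i$, with $0\le j\le i\le m$), two statements about the mutations $\ldual{j}{E_i}$ that occur in the construction of the dual sequence: first, that $\Hom_{\cat{T}}(E_{i-l},\ldual{j}{E_i}[k])=0$ for every $k\in\Z$ and every $l$ with $1\le l\le j$; second, that $\Hom_{\cat{T}}(E_i,\ldual{j}{E_i}[k])\iso\K^{\delta_{k,0}}$. The base case $j=0$ is immediate, since $\ldual{0}{E_i}=E_i$ is exceptional and the first statement is then vacuous. Taken at $j=i$, so that $\ldual{i}{E_i}=E'_i$, these give both $\Hom_{\cat{T}}(E_i,E'_i[k])\iso\K^{\delta_{k,0}}$ and $\Hom_{\cat{T}}(E_{i-l},E'_i[k])=0$ for $1\le l\le i$, and the remaining cases of the lemma will drop out of the exceptional sequence axioms.

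For the inductive step I would work with the distinguished triangle defining $\ldual{j}{E_i}$, namely $T\to\ldual{j-1}{E_i}\to\ldual{j}{E_i}\to T[1]$ with $T=\bigoplus_k\Hom_{\cat{T}}(E_{i-j}[k],\ldual{j-1}{E_i})\otimes_{\K}E_{i-j}[k]$ and first arrow the evaluation morphism $\mathrm{ev}$. Applying $\Hom_{\cat{T}}(E_{i-l},\farg)$ for $l<j$ annihilates $T$, because $i-l>i-j$ forces $\Hom_{\cat{T}}(E_{i-l},E_{i-j}[k])=0$ for all $k$, so the first statement propagates from $\ldual{j-1}{E_i}$; likewise $\Hom_{\cat{T}}(E_i,\farg)$ annihilates $T$ (as $i>i-j$), carrying the second statement forward. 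The one genuinely new computation is the case $l=j$: applying $\Hom_{\cat{T}}(E_{i-j}[n],\farg)$ to $T$ picks out, for each $n$, exactly the space $\Hom_{\cat{T}}(E_{i-j}[n],\ldual{j-1}{E_i})$ (using that $E_{i-j}$ is exceptional), and by the defining property of $\mathrm{ev}$ the resulting map $\Hom_{\cat{T}}(E_{i-j}[n],\mathrm{ev})$ is an isomorphism for every $n$; the long exact sequence of the triangle then forces $\Hom_{\cat{T}}(E_{i-j},\ldual{j}{E_i}[k])=0$ for all $k$, completing the induction.

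To conclude the lemma I would split $\Hom_{\cat{T}}(E_i,E'_j[k])$ into three cases. If $i=j$, this is the second inductive statement. If $i>j$, then $E'_j$ lies in $\trspan{E_0,\dots,E_j}$ (each mutation only introduces $E_{j-1},\dots,E_0$), and $\Hom_{\cat{T}}(E_i,E_b[k])=0$ for all $b\le j<i$, hence $\Hom_{\cat{T}}(E_i,E'_j[k])=0$. If $i<j$, then $E'_j=\ldual{j}{E_j}$ and $E_i=E_{j-l}$ with $1\le l=j-i\le j$, so the first inductive statement (applied with $j$ in place of the fixed index) gives $\Hom_{\cat{T}}(E_i,E'_j[k])=0$. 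The finiteness assumption on $\cat{T}$ is what makes every $T$ above a well-defined object. I expect the main obstacle to be essentially organizational — keeping track of which generator is orthogonal to which mutation across the three ranges of $(i,j)$ — together with the small but crucial observation that $\Hom_{\cat{T}}(E_{i-j}[n],\farg)$ turns the evaluation morphism into an isomorphism; beyond that, the triangulated-category input is light.
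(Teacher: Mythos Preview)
Your argument is correct. The paper itself does not prove this lemma: it is stated in the appendix without proof, with a blanket reference to \cite{V}, \cite{Bo} and \cite{GK} for ``proofs of the results which are only stated here.'' What you have written is essentially the standard proof one finds in those sources (particularly \cite{Bo} and \cite{GK}): analyse the defining mutation triangle $T\to\ldual{j-1}{E_i}\to\ldual{j}{E_i}\to T[1]$ under $\Hom_{\cat{T}}(E_a,\farg)$, using that $E_a$ is orthogonal to $E_{i-j}$ when $a>i-j$ and that $\Hom_{\cat{T}}(E_{i-j},\mathrm{ev})$ is an isomorphism when $a=i-j$. Your handling of the three ranges $i<j$, $i=j$, $i>j$ is clean, and the observation that $E'_j\in\trspan{E_0,\dots,E_j}$ disposes of $i>j$ without further induction. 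Nothing is missing.
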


\end{document}